\documentclass[12pt,a4paper]{article}
%------------packages-----------------
\usepackage{amsmath}
\usepackage{amssymb}
\usepackage{amsthm}
\usepackage{amsfonts}
\usepackage{mathscinet}
\usepackage{enumerate}
\usepackage{pdfsync}
\usepackage{setspace}
\usepackage{mathrsfs}
\usepackage{tikz}
\usepackage{stmaryrd}
\usepackage{hyperref}
\usetikzlibrary{matrix}

\usepackage[all]{xypic}

\newcommand \lh {\text{lh}}

%--------------meta commands-----------------
\newcommand {\new} {\newcommand}
\newcommand {\renew} {\renewcommand}
\newcommand \opera[2] {\renew #1 {\operatorname{#2}}}
\newcommand \oper[2] {\new #1 {\operatorname{#2}}}

%--------------set theory commands----------------------
%\newcommand\dom {{dom}}
%\newcommand\ran {{ran}}

               %Turing below
               %Turing above
%\newcommand \gpair [1] {\prec\! #1 \!\succ}	%godel pair
	%code
\newcommand \gcode[1] {\ulcorner\! #1 \!\urcorner}	%godel code
\newcommand \se [1] { \{ #1 \}}			%{x}
\newcommand\set [2]{ \{#1:#2\} }			%{x: x is gangyfan}
		%restriction
\newcommand\res {\!\upharpoonright\!}		%restriction
		%restriction in superscript
		%restriction in superscript
			%function
%\new\funcmore[5]{}
%\newcommand\funcmore [5] {{#1}:\xymatrix@1{{#2}\ar[r]^>>{#4}&{#3}}}	%funcmore
		%dots

%\newcommand\centerdot [3] {#1_#2,\cdots,#1_#3}
%\newcommand\centerdota [] {#1_#2\cdots#1_#3}
			%alephs

			%omega_1

				%elemenatary substructure
\newcommand\eqiv {\leftrightarrow}			%<-->
			%<==>
\newcommand\seq [2]{ \langle #1:#2 \rangle }	%<x: x<100>
\newcommand\corner[1] {
  \langle #1 \rangle}		%<x>
		%<x> big

			%zero-sharp
			%zero-sword
			%powerset
					%forcing relation

				%K^c

			%reals

			%sigma 1 2
			%sigma 1 2
			%sigma 1 2

			%sigma 1 3
			%pi 1 2
			%pi 1 3
			%sigma 1 4
\newcommand\coll{{Coll}}
       %AD+
    %AD_R
				%L(R)
				%L[E]
	%L[E,\Sigma]
	%L[E,\Sigma]

  %< in lexicographic order
 %<= in  lexicographic order

\newcommand\concat {{^\frown}}   %concatenation
   %concatenate a single element

\oper{\Ord}{Ord}				%ordinal

\oper{\ZFC}{ZFC}				%ZFC
\oper{\rank}{rank}				%rank
\oper{\crit}{cr}					%critical point
\oper{\crt}{crt}					%critical point
\oper{\cf}{cf}					%cofinality
\oper{\height}{ht}				%height
\oper{\wfcore}{wfcore}					%wellfounded core
\oper{\core}{core}					%core

\oper{\Ult}{Ult}				%ult
\oper{\Cone}{Cone}				%cone
\oper{\dirlim}{dirlim}
\oper{\rud}{rud}		%rud closure
\oper{\const}{const}
\oper{\OD}{OD}
\oper{\final}{final}
\oper{\HYP}{HYP}
\oper{\wfp}{wfp}
\oper{\Hom}{Hom}
\new{\ult} {\Ult}
\oper{\dom}{dom}
\oper{\rep}{rep}

\oper{\suc}{succ}
\oper{\fac}{fac}

\oper{\Code}{Code}

\oper{\ran}{ran}
\oper{\maxdom}{maxdom}
\oper{\maxran}{maxran}

\oper{\lp}{Lp} %lower part closure
\oper{\pro}{pro} %promotion
\oper{\lift}{lift} %promotion
\opera{\drop}{drop} %drop
\oper{\base}{base} %base

\newcommand\iniseg {\vartriangleleft}		%proper initial segment
	%lengthening
		%proper lengthening
		%hod proper lengthening
	%initial segment
	%hod initial segment
	%hod proper initial segment
			%isomorphic
  % inverse function

%----------------uncommon symbols-----------------------

%\oper {\liminf}{liminf}

%-----------------------declare theorems------------------------
\newtheorem{theorem}{Theorem}[section]

\newtheorem{conjecture}[theorem]{Conjecture}
\newtheorem{lemma}[theorem]{Lemma}

\newtheorem{claim}[theorem]{Claim}
\theoremstyle{definition}
\newtheorem{question}[theorem]{Question}
\newtheorem{definition}[theorem]{Definition}

\theoremstyle{definition} \newtheorem*{defplain*}{\gy{Definition}}
\theoremstyle{definition} \newtheorem*{thmplain*}{\gy{Theorem}}
\theoremstyle{definition} \newtheorem*{queplain*}{\gy{Question}}
\theoremstyle{definition} 
\theoremstyle{definition} \newtheorem*{corplain*}{\gy{Corollary}}

\theoremstyle{remark} 		
\theoremstyle{remark}

%---------------------        ------------

%==============only in this paper=================

\newcommand{\game}{{\Game}}

\newcommand{\boldsigma}[1]{{\boldsymbol{\Sigma}^1_{#1}}}

\newcommand{\bolddelta}[1]{{\boldsymbol{{\delta}}^1_{#1}}}
\newcommand{\boldDelta}[1]{{\boldsymbol{{\Delta}}^1_{#1}}}

\newcommand{\WO}{{\textrm{WO}}}

\newcommand{\DEF}{=_{{\textrm{DEF}}}}

\newcommand{\Diff}{\operatorname{Diff}}

\newcommand{\ot}{\mbox{o.t.}}

\newcommand{\sharpcode}[1]{  \left|   #1  \right|}

\newcommand{\comm}[1]{{}}

\usepackage{bbm, dsfont,authblk}
\title{Iterates of $M_1$}
\date{\today{}}
\author{Yizheng Zhu}

\affil{Institut f\"{u}r mathematische Logik und Grundlagenforschung \\
Fachbereich Mathematik und Informatik\\
Universit\"{a}t M\"{u}nster\\
Einsteinstr. 62 \\
48149 M\"{u}nster, Germany 
}

\begin{document}
\maketitle{}

\abstract{Assume $\boldDelta{2}$-determinacy. Let $L_{\kappa_3}[T_2]$ be the admissible closure of the Martin-Solovay tree and let $M_{1,\infty}$ be the direct limit of all iterates of $M_1$ via countable trees. We show that $L_{\kappa_3}[T_2] \cap V_{u_{\omega}} $ is the universe of $M_{1,\infty} | u_{\omega}$. 
}

\section{Introduction}
\label{sec:introduction}

Canonical models naturally arise in models of determinacy. Moschovakis et al.\  \cite[Section 8G]{mos_dst} started the investigation of the models $H_{\Gamma}$ and $L[T_{\Gamma}]$ if $AD$ holds and $\Gamma$ is a scaled pointclass closed under $\forall^{\mathbb{R}}$. 
These models have set-theoretical identity which are useful in further study of regularity properties of sets of reals.
At projective levels, when $\Gamma$ is $\Pi^1_{2n+1}$, the model is $H_{\Gamma^1_{2n+1}} = L[T_{2n+1}]$, shown by Becker-Kechris \cite{becker_kechris_1984}, where $T_{2n+1}$ is the tree of the $\Pi^1_{2n+1}$-scale on a good universal $\Pi^1_{2n+1}$ set. The next obvious question to ask is the internal structure of $H_{\Gamma}$, e.g.\ does GCH hold? 

For projective levels, Steel \cite{steel_hodlr_1995} shows that $L[T_{2n+1}]$ is a mouse. Let $M_{n,\infty}^{\#}$ be the direct limit of all the countable iterates of $M_{n,\infty}$ based on the bottom Woodin of $M_{n,\infty}^{\#}$, let $M_{n,\infty}$ be the result of iterating the top extender of $M_{n,\infty}^{\#}$ out of the universe. Let $\delta_{n,\infty}$ be the bottom Woodin of $M_{n,\infty}$ and $\kappa_{n,\infty}$ be the least $<\delta_{n,\infty}$ strong cardinal in $M_{n,\infty}$. Steel shows that $\kappa_{2n,\infty} = \bolddelta{2n+1}$ and that the universe of $M_{2n,\infty} | \kappa_{2n,\infty} $ is $L_{\bolddelta{2n+1}}[T_{2n+1}]$. It is worth mentioning that the extender sequence of $M_{2n,\infty} | \kappa_{2n,\infty} $ is definable over the universe of  $M_{2n,\infty} | \kappa_{2n,\infty} $:  the universe of  $M_{2n,\infty} | \kappa_{2n,\infty} $ satisfies that ``I am closed under the $M_{2n-1}^{\#}$-operator, there is no inner model with $2n$ Woodin cardinals, and I am the relativized Jensen-Steel core model (\cite{jensen_steel_core_model,core_model_induction})'', and the extender sequence of the Jensen-Steel core model built in the universe $M_{2n,\infty} | \delta_{2n+1,\infty} $ coincides with the extender sequence of $M_{2n,\infty} | \delta_{2n,\infty} $.
This paves the way for the study of the canonical model $L[T_{2n+1}]$ using inner model theory.
It is a strong evidence that $M_2$ is the correct model to work with for further investigation of $\Sigma^1_4$ sets. 
What about $M_1$? What does its direct limit $M_{1,\infty}$ look like? A partial result was by Hjorth \cite{hjorth_boundedness_lemma}, that $\delta_{1,\infty}=u_{\omega}$. This paper shows that the structure of $M_{1,\infty}$ has a  canonical characterization from descriptive set theory. 
The odd levels and even levels can now be unified with the following scope. 

Assume AD. Consider the Suslin cardinals. The first few are $\omega, \omega_1, u_{\omega}, \bolddelta{3}, (\bolddelta{5})^{-}, \bolddelta{5},\cdots$. For every Suslin cardinal $\kappa$, the pointclass of $\kappa$-Suslin sets is closed under $\exists^{\mathbb{R}}$. For the first few, $\omega$-Suslin sets are $\boldsigma{1}$, $\omega_1$-Suslin sets are $\boldsigma{2}$, $u_{\omega}$-Suslin sets are $\boldsigma{3}$, $\bolddelta{3}$-Suslin sets are $\boldsigma{4}$, etc. Consider the associated lightface pointclass in each case and consider a nice coding system of each Suslin cardinal. We can build canonical models associated to each Suslin cardinal in the following way:
\begin{enumerate}
\item Ordinals in $\omega_1$ have a $\Pi^1_1$-coding system, namely $\WO$, the set of wellorderings on $\omega$. $\WO$ is a $\Pi^1_1$ set and $\sharpcode{\cdot}$ is a $\Pi^1_1$-norm of $WO$ onto $\omega_1$. Define the universal $\Sigma^1_2$ set of ordinals in $\omega_1$ relative to this coding:
  \begin{displaymath}
    \mathcal{O}_{\Sigma^1_2} = \set{ ( \gcode{\varphi},\alpha)}{ \varphi \text{ is $\Sigma^1_2$}, \exists x\in \WO~(\sharpcode{x} = \alpha \wedge \varphi(x))}.
  \end{displaymath}
The canonical model associated to $\omega_1$ is $L_{\omega_1}[\mathcal{O}_{\Sigma^1_2}]$. By Shoenfield absoluteness, this model is just $L_{\omega_1}$.
\item Ordinals in $u_\omega$ have a $\Delta^1_3$-coding system, namely $\WO_{\omega}$, the set of sharp codes for ordinals in $u_{\omega}$.   $\sharpcode{\cdot}$ is a $\Delta^1_3$-norm of $\WO_{\omega}$ onto $u_\omega$. Define the universal $\Sigma^1_3$ set of ordinals in $u_\omega$ relative to this coding:
  \begin{displaymath}
    \mathcal{O}_{\Sigma^1_3} = \set{ ( \gcode{\varphi},\alpha)}{ \varphi \text{ is $\Sigma^1_3$}, \exists x\in \WO_{\omega}~(\sharpcode{x} = \alpha \wedge \varphi(x))}.
  \end{displaymath}
The canonical model associated to $u_\omega$ is $L_{u_\omega}[\mathcal{O}_{\Sigma^1_3}]$. By Q-theory and Kechris-Martin \cite{Q_theory,becker_kechris_1984,Kechris_Martin_I,Kechris_Martin_II}, the universe of this model equals to $L_{\kappa_3}[T_2]\cap V_{u_{\omega}}$, where $L_{\kappa_3}[T_2]$ is the admissible closure of the Martin-Solovay tree $T_2$. The main theorem of this paper is
\begin{displaymath}
  L_{u_{\omega}}[\mathcal{O}_{\Sigma^1_3}]  \text{ and } M_{1,\infty} | \delta_{1,\infty} \text{ have the same universe}. 
\end{displaymath}
There is a small difference between $L_{u_{\omega}}[\mathcal{O}_{\Sigma^1_3}] $ and $M_{1,\infty} | \delta_{1,\infty}$. 
Just like the case with $M_{2n,\infty}$,  the extender sequence of $M_{2n+1,\infty} | \delta_{2n+1,\infty} $ is definable over the universe of  $M_{2n+1,\infty} | \delta_{2n+1,\infty} $:  the universe of  $M_{2n+1,\infty} | \kappa_{2n+1,\infty} $ satisfies that ``I am closed under the $M_{2n}^{\#}$-operator, there is no inner model with $2n+1$ Woodin cardinals, and I am the relativized Jensen-Steel core model (\cite{jensen_steel_core_model})'', and the extender sequence of the Jensen-Steel core model built in the universe $M_{2n+1,\infty} | \kappa_{2n+1,\infty} $ coincides with the extender sequence of $M_{2n+1,\infty} |\delta_{2n+1,\infty} $. However, $\mathcal{O}_{\Sigma^1_3}$ is \emph{not} definable over the universe of $ L_{u_{\omega}}[\mathcal{O}_{\Sigma^1_3}]$. This is because the universe of $L_{u_{\omega}}[\mathcal{O}_{\Sigma^1_3}]$ is a model of ZFC, while using the predicate $\mathcal{O}_{\Sigma^1_3}$, one can easily define the sequence $(u_n: n < \omega)$ which singularizes $u_{\omega}$. 
\item  Ordinals in $\bolddelta{3}$ have a $\Pi^1_3$-coding system. Take a good universal $\Pi^1_3$ set $G$ and a $\Pi^1_3$ norm $\psi : G \to \bolddelta{3}$. Define the universal $\Sigma^1_4$ set of ordinals in $\bolddelta{3}$ relative to this coding:
  \begin{displaymath}
    \mathcal{O}_{\Sigma^1_4} = \set{ ( \gcode{\varphi},\alpha)}{ \varphi \text{ is $\Sigma^1_4$}, \exists x\in G~(\psi(x) = \alpha \wedge \varphi(x))}.
  \end{displaymath}
The canonical model associated to $\bolddelta{3}$ is $L_{\bolddelta{3}}[\mathcal{O}_{\Sigma^1_4}]$. It is independent of the choice of $G$ and $\varphi$, shown by Moschovakis \cite[8G.22]{mos_dst}. 
Steel \cite{steel_hodlr_1995} shows that $M_{2,\infty} | \kappa_{2,\infty} $ and $  L_{\bolddelta{3}}[\mathcal{O}_{\Sigma^1_4}]$ have the same universe. Here, in constast to $M_{1,\infty}|\delta_{1,\infty}$, the extender sequence of $M_{2,\infty}|\kappa_{2,\infty}$ and $\mathcal{O}_{\Sigma^1_4}$ are both definable over the universe of $M_{2,\infty}| \kappa_{2, \infty}$. 
\end{enumerate}
We mention without the proof that this paper routinely generalizes to
the higher levels based on
\cite{sharpI,sharpII,sharpIII,sharpIV}. Under AD, for arbitrary $n$, there is a $\Delta^1_{2n+1}$ coding system of ordinals in $(\bolddelta{2n+1})^-$ which generalizes the $\WO_{\omega}$ coding of $u_{\omega}$. Define $\mathcal{O}_{\Sigma^1_{2n+1}}$, the universal $\Sigma^1_{2n+1}$ subset of $(\bolddelta{2n+1})^-$ relative to this coding. Then
\begin{displaymath}
  M_{2n-1,\infty} | (\bolddelta{2n+1})^- \text{ and } L_{(\bolddelta{2n+1})^-}[\mathcal{O}_{\Sigma^1_{2n+1}}] \text{ have the same universe}.
\end{displaymath}

This unification of odd and even levels should hopefully isolate the correct questions.  For instance, the model $L[T_2]$, and its generalizations, $L[T_{2n}]$, 
were considered ``canonical'' \cite{MR730596,cabal_4_survey}. The uniqueness of $L[T_{2n}]$ was asked in \cite{MR730596} and solved by Hjorth \cite{hjorth_LT2} for $n=1$ and Atmai \cite{atmai_thesis} for arbitrary $n$. 
Atmai-Sargsyan \cite{atmai_thesis} proves that $L[T_2] = L[M_{1,\infty}^{\#}]$. However, it is hard and unnatural to investigate this model, the fundamental reason being that this model is the result of constructing on top of a non-sound mouse $M_{1,\infty}^{\#}$. Most of the standard methods in inner model theory break down as we always construct on top of a sound mouse.  It might seem as if inner model theory is not good enough to study $L[T_2]$. However, this is not the right intuition. It is inner model theory that helps figuring out the correct model. Atmai-Sargsyan's result suggests that $L[T_2]$ is the wrong model to work with, and this paper finds the correct model: $L_{u_{\omega}}[\mathcal{O}_{\Sigma^1_3}]$. This local version of $L[T_2]$ is a mouse. It deserves more attention. For instance, it captures $\Sigma^1_3$-truth by Q-theory \cite{Q_theory}:
\begin{enumerate}
\item There is an effective map $\varphi \mapsto \varphi^{*}$ that sends a $\Sigma^1_3$ formula $\varphi$ to a $\Pi^1_3$ formula $\varphi^{*} $such that $V \models \varphi$ iff $M_{1,\infty}\models \varphi^{*}$.
\item There is an effective map $\varphi \mapsto \varphi_{*}$ that sends a $\Sigma^1_3$ formula $\varphi$ to a $\Pi^1_3$ formula $\varphi_{*}$ such that  $M_{1,\infty}\models \varphi$ iff $V \models \varphi_{*}$.
\end{enumerate}
This anti-$\Sigma^1_3$-correctness result is comparable to the $\Sigma^1_{2n}$-correctness of the model $L[T_{2n+1}]$.

Under AD, there should be a canonical model associated to every Suslin cardinal. The next Suslin cardinal beyond projective is $\bolddelta{\omega}= \sup_{n<\omega}\bolddelta{n}$. $\bolddelta{\omega}$-Suslin sets are $\boldsymbol{\Sigma}_2^{J_2(\mathbb{R})}$. The canonical model should be $L_{\bolddelta{\omega}}[\mathcal{O}_{\Sigma_2^{J_2(\mathbb{R})}}]$. This model should also have a similar fine structure as in the projective levels.
However, it is still an open question whether the set of reals in this model is a mouse set, cf. \cite[Section 8.4]{Sandra}.

\section{Q-theory}
\label{sec:q-theory}
We assume $\boldDelta{2}$-determinacy throughout this paper. 
This section is a brief overview of the $Q$-theory in \cite{Q_theory} and related papers. $\WO = \WO_1$ is the set of canonical codes for countable ordinals. $\WO$ is $\Pi^1_1$. For $0<n<\omega$,
\begin{displaymath}
  \WO_{n+1} = \set{ \corner{\gcode{\tau}, y^{\#}}}{ \tau \text{ is a $n+1$-ary Skolem term}}
\end{displaymath}
$\WO_{n+1}$ is $\Pi^1_2$. 
For $\corner{\gcode{\tau}, y^{\#}} \in \WO_{n+1}$, it codes an ordinal below $u_{n+1}$:
\begin{displaymath}
  \sharpcode{\corner{\gcode{\tau}, y^{\#}}  } = \tau^{L[y]}(y,u_1,\dots,u_{n}).
\end{displaymath}
$\WO_{\omega} = \cup_{1 \leq n < \omega}\WO_n$. $A \subseteq u_{\omega} \times  \mathbb{R} $ is said to be $\Sigma^1_3$ iff
\begin{displaymath}
  \set{(w,x)}{ w \in \WO_{\omega}, (x,\sharpcode{w}) \in A}
\end{displaymath}
is $\Sigma^1_3$. Similarly define $\Pi^1_3$, $\Delta^1_3$ and their relativizations. $T_2$ is the Martin-Solovay tree on $\omega \times u_{\omega}$ projecting to $\set{x^{\#}}{x \in \mathbb{R}}$. $T_2$ is a $\Delta^1_3$ subset of $(\omega \times u_{\omega})^{<\omega}$. $\kappa_3^x$ is the least admissible ordinal over $(T_2,x)$. $\kappa_3 = \kappa_3^0$. A model-theoretic representation of $\Pi^1_3$ subsets is:
\begin{theorem}[\cite{becker_kechris_1984,Kechris_Martin_I,Kechris_Martin_II}]
  \label{thm:becker_kechris_martin}
Assume $\boldDelta{2}$-determinacy. 
  Suppose $A \subseteq  u_{\omega} \times \mathbb{R}$. The following are equivalent.
  \begin{enumerate}
  \item $A$ is $\Pi^1_3$.
  \item There is a $\Sigma_1$-formula $\varphi$ such that $(\alpha,x) \in A$ iff $L_{\kappa_3^x}[T_2,x] \models \varphi(T_2, \alpha, x)$. 
  \end{enumerate}
\end{theorem}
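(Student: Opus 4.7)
The plan is to treat the two directions separately, as a higher-level analogue of Spector-Gandy at the $L_{\omega_1^{CK}}/\Pi^1_1$ interface. The forward direction amounts to a uniform coding of initial segments of the admissible, while the reverse direction requires a nontrivial boundedness input to absorb the real quantifier into the admissible ordinal.

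For the direction $(2)\Rightarrow(1)$, the goal is to show that $\Sigma_1$-truth over $L_{\kappa_3^x}[T_2,x]$ is $\Pi^1_3$ uniformly in $x$. Since $T_2$ is $\Delta^1_3$ and $\kappa_3^x$ is the least $(T_2,x)$-admissible, the first step is to fix a real coding of the initial segments $(L_\beta[T_2,x],\in)$ for $\beta<\kappa_3^x$. The key observation is that the set of such codes is $\Pi^1_3$ in $x$, in perfect analogy with ``hyperarithmetic in $x$'' being $\Pi^1_1$ in the classical Spector-Gandy setting. A $\Sigma_1$-assertion then unfolds as: there exist $\beta<\kappa_3^x$ and a code $c$ of $L_\beta[T_2,x]$ in which a $\WO_{\omega}$-code $w$ for $\alpha$ picks out a witness of $\varphi$. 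Arithmetic truth in the coded structure is $\Delta^1_1$ in $c$, and composition with the $\Pi^1_3$-in-$x$ code predicate keeps the overall complexity $\Pi^1_3$ in $(x,w)$.

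For the direction $(1)\Rightarrow(2)$, given a $\Pi^1_3$ set $A$, the plan is to reduce $A$ to the normal form $A(\alpha,x)\Leftrightarrow \forall y\,[S(x,y,\alpha)\text{ is wellfounded}]$, where $S(x,y,\alpha)$ is a tree on $\omega\times u_{\omega}$ uniformly definable from $T_2$ via the Martin-Solovay representation of $\Pi^1_2$ (each $\Sigma^1_2$-section converts into illfoundedness of a derived $T_2$-section). The statement ``$\forall y$ wellfoundedness'' is then upgraded via a $\boldpi{3}$-boundedness principle into: ``there is a single ordinal $\eta<\kappa_3^x$ and a simultaneous rank assignment $h:\mathbb{R}\times(\omega\times u_{\omega})^{<\omega}\to\eta$ witnessing wellfoundedness of each $S(x,y,\alpha)$.'' The existence of such an $\eta$ and $h$ inside $L_{\kappa_3^x}[T_2,x]$ is a $\Sigma_1$ statement with parameters $T_2,\alpha,x$.

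The main obstacle is the boundedness step in $(1)\Rightarrow(2)$. Bare admissibility does not suffice, because the universal real quantifier $\forall y$ demands \emph{simultaneous} boundedness of $\rank(S(x,y,\alpha))$ across all $y\in\mathbb{R}$, not merely boundedness for each individual $y$. The $\boldpi{3}$-boundedness theorem, which under $\boldDelta{2}$-determinacy delivers exactly this uniform bound below $\kappa_3^x$, is the genuine descriptive-set-theoretic input; it is also the step that most transparently uses $\boldDelta{2}$-determinacy and the Q-theoretic machinery to be surveyed later in the section. Once this is in hand, the remainder is careful bookkeeping with the Martin-Solovay tree and the standard real coding of admissible structures.
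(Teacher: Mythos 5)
First, a point of reference: the paper does not prove this theorem at all — it is quoted from Becker–Kechris and Kechris–Martin (\cite{becker_kechris_1984,Kechris_Martin_I,Kechris_Martin_II}), so your attempt has to be measured against the known arguments in those papers. Against that standard, your proposal has two genuine gaps, one in each direction, and the first one is fatal to the overall architecture. In your $(2)\Rightarrow(1)$ direction you treat the claim as a routine Spector–Gandy-style coding argument: ``fix a real coding of the initial segments $(L_\beta[T_2,x],\in)$ for $\beta<\kappa_3^x$'' and observe that the set of codes is $\Pi^1_3$. But these structures are uncountable: $T_2$ is a tree on $\omega\times u_\omega$, the levels contain $u_1=\omega_1,u_2,\dots$ as elements, and $\kappa_3^x>u_\omega$, so the relevant levels have cardinality between $\aleph_1$ and $\aleph_\omega$ and admit no real codes whatsoever. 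The classical analogy with ``hyperarithmetic in $x$ is $\Pi^1_1$'' breaks exactly at this point. In fact this direction is the deep content of the theorem: one must show that $\Sigma_1$-satisfaction over $L_{\kappa_3^x}[T_2,x]$, with ordinal parameters presented through sharp codes in $\WO_\omega$, is $\Pi^1_3$, and this is essentially the Kechris–Martin theorem that $\Pi^1_3$ is closed under quantification over $u_\omega$ — proved by a nontrivial game/boundedness argument using $\boldDelta{2}$-determinacy, not by coding structures by reals.

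In your $(1)\Rightarrow(2)$ direction, the boundedness step is also not right as stated. You ask for a ``simultaneous rank assignment $h:\mathbb{R}\times(\omega\times u_{\omega})^{<\omega}\to\eta$'' witnessing wellfoundedness for all $y$, to be found inside $L_{\kappa_3^x}[T_2,x]$. No such $h$ can be an element of that model: its domain contains all reals, whereas $\mathbb{R}\cap L_{\kappa_3^x}[T_2,x]$ is a small proper subset of $\mathbb{R}$, and quantifying over such total-on-$\mathbb{R}$ objects is not a $\Sigma_1$ assertion over the model. The universal real quantifier has to be absorbed into the tree \emph{before} one passes to wellfoundedness: write $\neg A$ in the form $\exists y\,P$ with $P$ $\Pi^1_2$, use the Martin–Solovay construction to get a tree on $\omega\times u_\omega$, definable from $T_2$ (amalgamating the witness $y$ with the $u_\omega$-valued branch), whose projection is this $\Sigma^1_3$ set; then $A(\alpha,x)$ is equivalent to wellfoundedness of a single tree that \emph{is} an element of $L_{\kappa_3^x}[T_2,x]$, and ``there exists a rank function for it'' is the required $\Sigma_1$ formula by $\Sigma$-recursion in the admissible set. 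No separate ``$\boldpi{3}$-boundedness principle'' giving a uniform bound over all $y$ is available or needed in the form you invoke. Finally, in both directions you never address the invariance issue: the representation must depend on the ordinal $\alpha$ and not on the particular code $w\in\WO_\omega$ with $\sharpcode{w}=\alpha$, which in the actual proofs is handled via the homogeneity/indiscernibility properties underlying $T_2$.
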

The conversions between the $\Pi^1_3$ definition of $A$ and the $\Sigma_1$-formula $\varphi$ in Theorem~\ref{thm:becker_kechris_martin} are effective.

\section{Suitable Premice}
\label{sec:suitable-premice}

% A countable premouse $\mathcal{P}$ is suitable iff there is $\delta$ such that 
This section contains a brief overview of the usual definitions on suitable premice that occurs in a typical HOD computation (cf. \cite{hod_as_a_core_model}). % At the end of this section, Lemma~\ref{lem:bounded_iterable_exists}, proves a nontrivial $s$-iterable premice existence result in $L[x]$. It is the absoluteness between $V$ and $L[x]$ in Lemma~\ref{lem:bounded_iterable_exists} that drives the computation of $HOD^{L[x]}$. 

If $N$ has a unique Woodin cardinal, it is denoted by $\delta^N$. The extender algebra in $N$ at $\delta^N$  with $\omega$-generators is denoted by $\mathbb{B}^N$. 
 A class-sized premouse $N$ is \emph{$M_1$-like} iff there is $\delta$ such that $N = L[N|\delta]$ and
\begin{enumerate}
\item $N \models \delta$ is Woodin,
% \item $L[\mathcal{P}] \models o(\mathcal{P}) = \delta^{+} $,
\item for every $\eta < \delta$, $L[N| \eta] \models $``$\eta$ is not Woodin'', and
\item $N \models \forall \eta < \delta (\text{``I am $(\eta,\eta)$-iterable}")$.  
\end{enumerate}
% $\delta^{\mathcal{N}}$ denotes the Woodin cardinal of $\mathcal{N}$.
% $\delta^{\mathcal{P}}$ denotes the Woodin cardinal of $\mathcal{P}$.
A premouse $\mathcal{P}$ is \emph{suitable} iff $L[\mathcal{P}]$ is $M_1$-like and $o(\mathcal{P}) $ is the cardinal successor of $\delta^{\mathcal{P}}$ in $L[\mathcal{P}]$. 
If $\mathcal{P}$ is suitable, $\mathcal{P}$ is called the suitable initial segment of $L[\mathcal{P}]$.
The suitable initial segment of an $M_1$-like $N$ is called $N^{-}$. 
 The set of reals coding countable, suitable premice is $\Delta^1_3$.

 If $\mathcal{T}$ is a normal iteration tree on a suitable $\mathcal{P}$, then
\begin{enumerate}
\item $\mathcal{T}$ is \emph{short} iff either $\mathcal{T}$ has a last model $\mathcal{M}_{\alpha}$ such that $\mathcal{M}_{\alpha}$ is suitable or $[0,\alpha]_T$ drops, or $\mathcal{T}$ has limit length , $\mathcal{Q}(\mathcal{T})$ exists, and $\mathcal{Q}(\mathcal{T}) \iniseg L[\mathcal{M}(\mathcal{T})]$.
\item $\mathcal{T}$ is \emph{maximal} iff $\mathcal{T}$ is not short. 
\end{enumerate}
If $\mathcal{P}$ is suitable, then $\mathcal{P}$ is \emph{short tree iterable} iff whenever $\mathcal{T}$ is a short tree on $\mathcal{P}$, then
\begin{enumerate}
\item if $\mathcal{T}$ has a last model, then it can be freely extended by one more ultrapower, that is, every putative normal tree $\mathcal{U}$ extending $\mathcal{T}$ and having length $\lh(\mathcal{T})+1$ has a wellfounded last model, and moreover this model is suitable if the branch leading to it does not drop,
\item if $\mathcal{T}$ has limit length and $\mathcal{T}$ is short, then $\mathcal{T}$ has a cofinal wellfounded branch $b$, and moreover $\mathcal{M}^{\mathcal{T}}_b$ is suitable if $b$ does not drop. 
\end{enumerate}
It is shown in \cite{hod_as_a_core_model} that every suitable $\mathcal{P}$ is short tree iterable. If $\mathcal{P}$ is suitable, $\mathcal{Q}$ is called a \emph{pseudo-normal-iterate} of $\mathcal{P}$ iff $\mathcal{Q}$ is suitable, and there is a normal tree $\mathcal{T}$ on $\mathcal{P}$ such that either $\mathcal{Q}$ is the last model of $\mathcal{T}$, or $\mathcal{T}$ is maximal and $\mathcal{Q}$ is the suitable initial segment of $L[\mathcal{M}(\mathcal{T})]$. 

Suppose $s$ is a finite set of ordinals. We define $s^{-} = s \setminus \max(s)$ and  $\gamma^{\mathcal{P}}_s = \sup (Hull^{J_s[\mathcal{P}]}(s^{-}) \cap \delta^{\mathcal{P}})$. If $\mathcal{T}$ is an iteration tree on $J_{\max(s)}[\mathcal{P}]$ with two cofinal branches $b,c$ such that $\mathcal{M}_b^{\mathcal{T}} = \mathcal{M}_c^{\mathcal{T}} = J_{\max(s)} [\mathcal{M}(\mathcal{T})]$ and $\pi_b^{\mathcal{T}}(s^{-}) = \pi_c^{\mathcal{T}}(s^{-})$, then
\begin{displaymath}
\pi_b^{\mathcal{T}} \res \gamma^{\mathcal{P}}_s = \pi_c^{\mathcal{T}} \res \gamma^{\mathcal{P}}_s.
\end{displaymath}
This is a useful consequence of the zipper argument in \cite[Theorem 6.10]{steel-handbook}. It is used by Hjorth \cite{hjorth_boundedness_lemma} to show that $u_{\omega} = \delta^{M_{1,\infty}}$.

\section{The full direct limit $M_{1,\infty}$}
\label{sec:full-direct-limit}

\begin{definition}\label{def:skolem_term}
We define a fixed binary Skolem term 
\begin{displaymath}
  \rho
\end{displaymath}
as follows. 
  If $\mathcal{P}$ is a countable, suitable premouse, $n<\omega$, for countable ordinals $\alpha_1 < \dots < \alpha_{n}$, define the bad-sequence relation
  \begin{displaymath}
    ( \seq{{\mathcal{T}}_i  }{i < k'}, \seq{\mathcal{P}_i}{i \leq k'}, \eta ') <_{\alpha_1,\dots,\alpha_{n}}^{\mathcal{P}}  ( \seq{{\mathcal{U}}_i  }{i < k}, \seq{\mathcal{Q}_i}{i \leq k}, \eta )
  \end{displaymath}
  iff
  \begin{enumerate}
  \item $k \leq k' < \omega$,
  \item $\forall i < k ({\mathcal{T}}_i = {\mathcal{U}}_i )$, $\forall i \leq k (\mathcal{P}_i = \mathcal{Q}_i)$,
  \item $\mathcal{P}_0 = J_{\alpha_{n}}[\mathcal{P}]$,
  \item for any $i < k'$,  ${\mathcal{T}}_i$ is a countable, normal iteration tree on $J_{\alpha_n}[\mathcal{P}_i]$ with last model $J_{\alpha_n}[\mathcal{P}_{i+1}]$ such that  $\pi^{{\mathcal{T}}_i}$ exists and  $\pi^{{\mathcal{T}}_i}(\alpha_1,\dots,\alpha_{n-1}) = (\alpha_1,\dots,\alpha_{n-1})$,
  \item $\eta < \gamma^{\mathcal{P}_k}_{\se{\alpha_1,\dots,\alpha_{n}}}$, $\eta' < \gamma^{\mathcal{P}_{k'}}_{\se{\alpha_1,\dots,\alpha_{n}}}$, $\eta' < \pi^{\oplus_{k \leq i < k'} {\mathcal{T}}_i } (\eta)$.
  \end{enumerate}
  $<_{\alpha_1,\dots,\alpha_{n}}^{\mathcal{P}}$ is $\Delta^1_1$ in the
  codes of $\mathcal{P}$ and $\alpha_1,\dots,\alpha_{n}$. The bad
  sequence argument in \cite{hjorth_boundedness_lemma} shows that
  $<_{\alpha_1,\dots,\alpha_{n}}^{\mathcal{P}}$ is wellfounded for any
  countable $\alpha_1< \dots < \alpha_{n} $. Hence, the rank of
  $<_{\alpha_1,\dots,\alpha_{n}}$ is smaller than the smallest
  $(\mathcal{P},\alpha_{n})$-admissible. By Shoenfield absoluteness, 
for any $\nu < \gamma^{\mathcal{P}}_{\se{\alpha_1, \dots, \alpha_n}}$,
the rank of $(\emptyset, \corner{J_{\alpha_{n}}[\mathcal{P}]}, \nu)$
in $<_{\alpha_1,\dots,\alpha_{n}}^{\mathcal{P}}$ is the same in any
proper class model $W$ of ZFC satisfying that $(\mathcal{P}, \alpha_1, \dots,
\alpha_n) \in HC^W$. 
There is a fixed Skolem term $\rho$ such that for $\nu < \gamma^{\mathcal{P}}_{\se{\alpha_1, \dots, \alpha_{n}}}$,
  \begin{align*}
    \rho^{L[\mathcal{P}]}(\nu, (\alpha_1,\dots,\alpha_{n})) =  ~& \text{the rank of $(\emptyset, \corner{J_{\alpha_{n}}[\mathcal{P}]}, \nu)$ in $<_{\alpha_1,\dots,\alpha_{n}}^{\mathcal{P}}$}\\
    & \text{as computed in }L[\mathcal{P}]^{\coll(\omega, \alpha_n)}.
  \end{align*}
\end{definition}
Thus, for any proper class model $W$ of ZFC satisfying that $(\mathcal{P}, \alpha_1, \dots,
\alpha_n) \in HC^W$, $ \rho^{L[\mathcal{P}]}(\nu, (\alpha_1,\dots,\alpha_{n})) $ is the rank of $(\emptyset, \corner{J_{\alpha_{n}}[\mathcal{P}]}, \nu)$ in $<_{\alpha_1,\dots,\alpha_{n}}^{\mathcal{P}}$ as computed in $W$. 
This fixed term $\rho$ is thus allowed to apply on uncountable
ordinals $\alpha_1,\dots,\alpha_{n}$ as well. For instance, when $\mathcal{P}$ is still countable in
$V$, 
\begin{displaymath}
  \rho^{L[\mathcal{P}]} (\nu, (u_1, \dots, u_n)),
\end{displaymath}
interprets the rank of $(\emptyset, \corner{J_{u_n}[\mathcal{P}]},
\nu)$ in $<_{u_1,\dots,u_{n}}^{\mathcal{P}}$ as computed in the universe $L[\mathcal{P}]^{\coll(\omega, u_n)}$. In particular, we have by indiscernibility that 
\begin{displaymath}
  \rho^{L[\mathcal{P}]} (\nu, (u_1, \dots, u_n)) = \rho^{L_{u_{n+1}}[\mathcal{P}]} (\nu, (u_1, \dots, u_n)).  
\end{displaymath}

In this paper, by ``a countable iterate of $M_1$'', we mean an iterate
of $M_1$ via a hereditarily
countable stack of normal iteration trees according to the canonical
strategy of $M_1$. If $N$ is a countable iterate of $M_1$ and the
iteration map $\pi_{M_1,N}$ on the main branch exists, ``a
countable iterate of $N$'' means an iterate of $N$ via a hereditarily
countable stack of normal iteration trees according to the canonical
strategy of $N$. If $N$ is a countable iterate of $M_1$,
$\pi_{N,\infty}$ denotes the tail of the direct limit map from $N$ to
$M_{1,\infty}$. 
\begin{lemma}
  \label{lem:rho_n_fixed}
If $N$ is a countable iterate of $M_1$ and  $P$ is a further iterate of $N$ with iteration map $\pi_{NP}$, $\nu < \gamma^N_{\se{u_1,\dots,u_n}}$, then 
  \begin{displaymath}
    \rho^N (\nu, (u_1,\dots, u_{n})) = \rho^P ( \pi_{NP} (\nu), (u_1, \dots, u_{n})).
  \end{displaymath}
\end{lemma}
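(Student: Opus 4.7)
The plan is to derive the lemma from an induction on the rank in the bad-sequence relations of Definition~\ref{def:skolem_term}, after first establishing that every iteration map in sight fixes $(u_1,\dots,u_n)$ pointwise. I would first show that $\pi_{NP}(u_i) = u_i$ for every $i$. Every constituent tree of the stack realising $\pi_{NP}$ uses extenders from some $\mathcal{P}_j|\delta^{\mathcal{P}_j}$, so has countable-in-$V$ support, and the associated ultrapower embedding is continuous at every ordinal of uncountable $V$-cofinality. A short induction on $i$ then gives $\pi_E(u_i)=u_i$ for each such $E$: an iterate of a countable premouse by such $E$ has the form $L[B']$ with $B'$ still countable in $V$, hence all its cardinals below $u_i$ remain $V$-countable; combined with continuity at $u_i$ this forces $\pi_E(u_i)=u_i$. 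Composing yields $\pi_{NP}(u_i)=u_i$, which in particular guarantees that $\pi_{NP}(\nu) < \gamma^{P}_{\{u_1,\dots,u_n\}}$ and that the stack for $\pi_{NP}$ satisfies clause (4) of the definition.

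Next I would observe that the rank of a node $(\vec{\mathcal{T}},\vec{\mathcal{P}},\eta)$ in the bad-sequence relation depends only on the final pair $(\mathcal{P}_k,\eta)$: its immediate predecessors (either keep the stack and strictly decrease $\eta$, or append a further tree $\mathcal{T}_k$ on $\mathcal{P}_k$ with some $\eta' < \pi^{\mathcal{T}_k}(\eta)$) are determined by this pair alone. Writing $R^{\mathcal{P}}(\eta)$ for this common rank, the absoluteness remark of Definition~\ref{def:skolem_term} gives $\rho^{\mathcal{P}}(\eta,(u_1,\dots,u_n)) = R^{\mathcal{P}}(\eta)$.

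The core step is a transfinite induction on $\alpha$ showing that whenever $R^{\mathcal{P}}(\eta)=\alpha$ and $\mathcal{Q}$ is a countable iterate of $\mathcal{P}$, $R^{\mathcal{Q}}(\pi_{\mathcal{P},\mathcal{Q}}(\eta))=\alpha$. One direction is immediate: prepending the stack realising $\pi_{\mathcal{P},\mathcal{Q}}$ to any descent from $(\mathcal{Q},\pi_{\mathcal{P},\mathcal{Q}}(\eta))$ produces a descent from $(\mathcal{P},\eta)$ with the same final pair, so $R^{\mathcal{P}}(\eta)\ge R^{\mathcal{Q}}(\pi_{\mathcal{P},\mathcal{Q}}(\eta))$. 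For the reverse, given any predecessor $(\mathcal{P}',\eta'')$ of $(\mathcal{P},\eta)$, I would use comparison of countable iterates of $M_1$ to pick a common iterate $\mathcal{Q}'$ of $\mathcal{P}'$ and $\mathcal{Q}$; the commutativity $\pi_{\mathcal{P}',\mathcal{Q}'}\circ\pi_{\mathcal{P},\mathcal{P}'} = \pi_{\mathcal{Q},\mathcal{Q}'}\circ\pi_{\mathcal{P},\mathcal{Q}}$ exhibits $(\mathcal{Q}',\pi_{\mathcal{P}',\mathcal{Q}'}(\eta''))$ as a predecessor of $(\mathcal{Q},\pi_{\mathcal{P},\mathcal{Q}}(\eta))$, and the inductive hypothesis (which applies since $R^{\mathcal{P}'}(\eta'')<\alpha$) transports the rank. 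Setting $(\mathcal{P},\eta,\mathcal{Q})=(N,\nu,P)$ yields the lemma.

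I expect the main obstacle to be the very first step: without $\pi_{NP}(u_i)=u_i$ the quantity $\rho^{P}(\pi_{NP}(\nu),(u_1,\dots,u_n))$ need not even be defined, and the stack realising $\pi_{NP}$ cannot be prepended to a bad sequence on $P$. The invariance rests on the ``countably built'' character of iterates of countable premice in $V$ together with the continuity of internal ultrapowers at ordinals of uncountable $V$-cofinality, both of which need careful bookkeeping before the rank-transfer argument becomes formal.
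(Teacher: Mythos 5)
Your preliminary step (showing $\pi_{NP}(u_i)=u_i$ via continuity at ordinals of uncountable $V$-cofinality plus the $V$-countability of all the models involved) is correct and is indeed needed; the paper uses it silently. Your inequality $R^{\mathcal{P}}(\eta)\ge R^{\mathcal{Q}}(\pi_{\mathcal{P},\mathcal{Q}}(\eta))$, obtained by prepending the stack realising $\pi_{\mathcal{P},\mathcal{Q}}$ to bad sequences on $\mathcal{Q}$, is exactly the paper's argument for the $\ge$ direction --- except that the paper carries it out for countable Silver indiscernibles $\alpha_1<\dots<\alpha_n$ of $L[N^-,P^-,\mathcal{T}]$ and then transfers to $(u_1,\dots,u_n)$ by indiscernibility, which is the clean way to handle the fact that $<^{\mathcal{P}}_{u_1,\dots,u_n}$ is officially evaluated in $L[\mathcal{P}]^{\coll(\omega,u_n)}$ rather than in $V$.

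The genuine gap is in your ``reverse'' direction. You propose, given a $<^{\mathcal{P}}_{\vec\alpha}$-predecessor $(\mathcal{P}',\eta'')$ of $(\mathcal{P},\eta)$, to compare $\mathcal{P}'$ with $\mathcal{Q}$ ``as countable iterates of $M_1$'' and to invoke the commutativity $\pi_{\mathcal{P}',\mathcal{Q}'}\circ\pi_{\mathcal{P},\mathcal{P}'}=\pi_{\mathcal{Q},\mathcal{Q}'}\circ\pi_{\mathcal{P},\mathcal{Q}}$. But the models $\mathcal{P}_i$ occurring in a bad sequence are last models of \emph{arbitrary} countable normal trees with wellfounded last model and existing main-branch embedding; nothing requires those trees to be played according to the canonical strategy of $M_1$. (This is precisely why the wellfoundedness of $<^{\mathcal{P}}_{\vec\alpha}$ needs Hjorth's bad-sequence argument at all.) So $\mathcal{P}'$ is not known to be a strategic iterate, comparison with $\mathcal{Q}$ inside the relevant model is not available, and even when it is, the map $\pi_{\mathcal{P},\mathcal{P}'}$ coming from the bad sequence need not commute with the canonical iteration maps --- Dodd--Jensen only controls strategic copies. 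As written, the induction step does not go through. Fortunately this direction is the easy one and needs none of this machinery: $\rho$ is a Skolem term, $\pi_{NP}\colon N\to P$ is elementary and fixes $u_1,\dots,u_n$ by your first step, so
\begin{displaymath}
  \pi_{NP}\bigl(\rho^{N}(\nu,(u_1,\dots,u_n))\bigr)=\rho^{P}\bigl(\pi_{NP}(\nu),(u_1,\dots,u_n)\bigr),
\end{displaymath}
and since iteration maps are non-decreasing on the ordinals this immediately gives $\rho^{N}(\nu,\vec u)\le\rho^{P}(\pi_{NP}(\nu),\vec u)$, which is all that remains once the prepending argument has given $\ge$. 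I would replace your comparison-based induction by this one-line elementarity observation.
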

\begin{proof}
$\pi_{NP}$ moves the left hand side to the right hand side. So we automatically have the $\leq$ direction.

On the other hand, whenever $\alpha_1 < \dots < \alpha_{n}$ are countable Silver indiscernibles for $L[{N}^{-},P^{-},\mathcal{T}]$ where $\mathcal{T}$ is the countable tree leading from $N$ to $P$, 
$<^{P^{-}}_{\alpha_1,\dots,\alpha_{n}}$ embeds into $<^{N^{-}}_{\alpha_1,\dots,\alpha_{n}}$ via
\begin{displaymath}
( \seq{{\mathcal{U}}_i  }{i< k},\seq{\mathcal{Q}_i}{  i \leq k},  \eta ) \mapsto ({\mathcal{T}^{*}} \concat  \seq{{\mathcal{U}}_i  }{ i < k}, N | \alpha_n\concat \seq{\mathcal{Q}_i }{i \leq k},  \eta )
\end{displaymath}
where $\mathcal{T}^{*}$ is $\mathcal{T}$ construed as an iteration tree on $N | \alpha_n$. 
This embedding implies that
\begin{displaymath}
\rho^{N }(\nu, (\alpha_1,\dots,\alpha_n))  \geq\rho^{P}(\pi_{NP}(\nu), (\alpha_1,\dots,\alpha_n)) 
\end{displaymath}
and hence 
the $\geq$ direction of the lemma by indiscernibility. 
  % \begin{displaymath}
  % \rho^{L[\mathcal{Q}]}(\nu, (u_1,\dots, u_{n})) \geq \rho^{L[\mathcal{R}]} ( \pi_{\mathcal{Q}, \mathcal{R}, \se{u_1,\dots,u_{n}}} (\nu), (u_1,\dots, u_{n})).
  % \end{displaymath}
\end{proof}

\begin{definition}\label{def:P}
  $P_{u_1,\dots,u_n}$ is the set of $\alpha < u_{n+1}$ for which there is a countable iterate $N$ of $M_1$ and $\nu < \gamma^N_{\se{u_1,\dots,u_n}}$ such that
  \begin{displaymath}
\alpha = \rho^{N} (\nu, (u_1,\dots,u_n)).
\end{displaymath}

\end{definition}

Working in a model of the form $L[x]$ for some $x \in \mathbb{R}$, we say that
\begin{displaymath}
 \mathcal{Q} \text{ is } (\eta_1,\dots,\eta_n, u_{n+1}) \text{-iterable by $\rho$-value}
\end{displaymath}
iff $\mathcal{Q}$ is countable, suitable,  $\eta_1 < \dots < \eta_n < u_{n+1}$, $\beta < \gamma^{\mathcal{Q}}_{\se{\eta_1,\dots,\eta_n}}$ and
\begin{enumerate}
\item if $\mathcal{T}$ is a short tree on $\mathcal{Q}$ of length
  $\leq \omega_1$ with iteration map $\pi^{\mathcal{T}}$ on its main
  branch, then $\pi^{\mathcal{T}}(\eta_1, \dots, \eta_n) = (\eta_1,
  \dots, \eta_n)$ and for any $\beta <
  \gamma^{\mathcal{Q}}_{\se{\eta_1, \dots, \eta_n}}$, $\pi^{\mathcal{T}}(\rho^{L_{u_{n+1}}[\mathcal{Q}]} ( \beta,
  (\eta_1,\dots,\eta_n)) ) = \rho^{L_{u_{n+1}}[\mathcal{Q}]} ( \beta,
  (\eta_1,\dots,\eta_n))$.
\item if $\mathcal{T}$ is a maximal tree on $\mathcal{Q}$ of length
  $\leq \omega_1$, then there is a branch
  $b \in L[x]^{\coll(\omega,u_{n+1})}$ such that
  $u_{n+1}$ is contained in the wellfounded
  part of $\mathcal{M}_b^{\mathcal{T}}$,
  $\pi_b^{\mathcal{T}}(\eta_1,\dots,\eta_n, u_{n+1}) =
  (\eta_1,\dots,\eta_n, u_{n+1})$ and for any $\beta <
  \gamma^{\mathcal{Q}}_{\se{\eta_1, \dots, \eta_n}}$, 
  $\pi_b^{\mathcal{T}}(\rho^{L_{u_{n+1}}[\mathcal{Q}]} ( \beta,
  (\eta_1,\dots,\eta_n)) ) = \rho^{L_{u_{n+1}}[\mathcal{Q}]} ( \beta,
  (\eta_1,\dots,\eta_n))$.
\end{enumerate}
By $\Sigma^1_1$-absoluteness and Lemma~\ref{lem:rho_n_fixed}, for any
countable iterate $N$ of $M_1$, if $x$ is a real and $N^{-} \in
HC^{L[x]}$,
\begin{displaymath}
  L[x] \models N^{-} \text{ is $(u_1, \dots, u_n, u_{n+1})$-iterable by
  $\rho$-value}. 
\end{displaymath}

We will show that $P_{u_1, \dots, u_n} \in
L_{u_{\omega}}[\mathcal{O}_{\Sigma^1_3}]$ by estimating the
complexity. Recall in \cite{hjorth_boundedness_lemma} the definition of the pointclass
$\Gamma_{1,n}$.  $A \subseteq \mathbb{R}$ is in $\Gamma_{1,n}$ iff there is a formula $\varphi$ such that
\begin{displaymath}
  A = \set{x}{ L[x] \models \varphi (x, u_1, \dots, u_n)}. 
\end{displaymath}
We have by Martin \cite{martin_largest}
\begin{displaymath}
  \game ( \omega n \text{-} \Pi^1_1 )\subseteq \Gamma_{1,n} \subseteq \game ( \omega (n+1) \text{-} \Pi^1_1 ).
\end{displaymath}
We now allow the pointclass to act on ordinals as well. $A \subseteq \mathbb{R} \times u_{\omega}$ is said to be in $\Gamma_{1,n}$ iff there is a formula $\varphi$ such that
\begin{displaymath}
  A = \set{(x, \alpha)}{ L[x] \models \varphi (x, \alpha, u_1, \dots, u_n)}. 
\end{displaymath}
If $C \subseteq \mathbb{R}$, $G(C)$ is the infinite game on $\omega$
in which two players collabrate to produce a real $x$ and I wins iff
$x \in C$. If $A \subseteq \mathbb{R} \times u_{\omega}$, $B$ is the
set of $\alpha < u_{\omega}$ such that I has a winning strategy in
$G(A_\alpha)$, where $A_\alpha = \{ \alpha < u_{\omega} : (x, \alpha)
\in A\}$. 
Naturally, $B \subseteq u_{\omega}$ is said to be in $\game \Gamma_{1,n}$ iff there is $A \subseteq \mathbb{R} \times u_{\omega}$ in $\Gamma_{1,n}$ such that $B = \game A$. 

\begin{lemma}
    \label{lem:P_n_complexity}
    $P_{u_1,\dots,u_n}$ is $\game  \Gamma_{1,n+1} $. 
\end{lemma}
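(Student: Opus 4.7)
The plan is to realize $P_{u_1,\dots,u_n}$ as $\game A$ for some $A \subseteq \mathbb{R}\times u_{\omega}$ in $\Gamma_{1,n+1}$. The game $G(A_\alpha)$ will be a single-real game in which, decoding $x$ into Player I's and Player II's plays, Player I commits to a countable suitable premouse $\mathcal{Q}$, an ordinal $\beta < \gamma^{\mathcal{Q}}_{\se{u_1,\dots,u_n}}$ with $\alpha = \rho^{L_{u_{n+1}}[\mathcal{Q}]}(\beta,(u_1,\dots,u_n))$, and, interleaved with Player II's play, a cofinal branch $b$ (in case Player II builds a limit-length tree); Player II builds a countable normal iteration tree $\mathcal{T}$ on $\mathcal{Q}$. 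Player I wins iff $(\mathcal{Q},\beta)$ satisfy the $\rho$-value identity and either $\mathcal{T}$ is ill-formed, or $\mathcal{T}$ has a last model $\mathcal{Q}^{*}$ with $\pi^{\mathcal{T}}$ fixing $(u_1,\dots,u_n)$ and $\rho^{L_{u_{n+1}}[\mathcal{Q}^{*}]}(\pi^{\mathcal{T}}(\beta),(u_1,\dots,u_n)) = \alpha$, or $\mathcal{T}$ has limit length and $b$ is a cofinal branch whose branch model is wellfounded up to $u_{n+1}$ and whose $\pi_b^{\mathcal{T}}$ satisfies the analogous fixing property.

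First I would verify the easy direction: if $\alpha \in P_{u_1,\dots,u_n}$, witnessed by a countable iterate $N$ of $M_1$ and $\nu < \gamma^N_{\se{u_1,\dots,u_n}}$ with $\alpha = \rho^N(\nu,(u_1,\dots,u_n))$, then Player I plays $(N^{-},\nu)$ and reads off the required cofinal branches using the fact, already recorded in Section~\ref{sec:full-direct-limit}, that $L[x] \models N^{-}$ is $(u_1,\dots,u_n,u_{n+1})$-iterable by $\rho$-value for any real $x$ with $N^{-} \in HC^{L[x]}$. Lemma~\ref{lem:rho_n_fixed} guarantees that $\rho$-values are preserved at all successor and last-model stages, so Player I wins.

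For the reverse direction, given a winning strategy $\sigma$ for Player I, read off $(\mathcal{Q},\beta)$ from its first move; then construct a coiteration of $\mathcal{Q}$ against $M_1^{-}$ whose $\mathcal{Q}$-side trees are fed to $\sigma$ as Player II's moves and whose $M_1^{-}$-side uses the canonical $M_1$-strategy. Since both sides are suitable and short-tree iterable this comparison succeeds and produces a common pseudo-iterate $\mathcal{R}$ which on the $M_1^{-}$-side equals $N^{-}$ for a countable iterate $N$ of $M_1$, while on the $\mathcal{Q}$-side $\sigma$'s clause guarantees $\rho$-preservation. Hence $\alpha = \rho^{\mathcal{R}}(\pi_{\mathcal{Q}\mathcal{R}}(\beta),(u_1,\dots,u_n)) = \rho^{N^{-}}(\pi_{\mathcal{Q}\mathcal{R}}(\beta),(u_1,\dots,u_n))$, showing $\alpha \in P_{u_1,\dots,u_n}$.

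Finally, the complexity count: the payoff is evaluated inside $L[x]$ from parameters $\alpha,u_1,\dots,u_{n+1}$. By Definition~\ref{def:skolem_term} the value $\rho^{L_{u_{n+1}}[\mathcal{Q}]}(\beta,(u_1,\dots,u_n))$ is absolute between proper-class ZFC models containing $(\mathcal{Q},u_{n+1})$, so it agrees with its value as computed in $L[x]$; suitability and the verification that $\mathcal{T}$ is a valid countable normal iteration tree are $\Delta^1_3$ in the codes; and the branch-model wellfoundedness and $\rho$-fixing checks are first-order over $L[x]$ with parameter $u_{n+1}$. This places $A$ in $\Gamma_{1,n+1}$. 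The main obstacle is the reverse direction: one must verify that the coiteration of a generic short-tree-iterable $\mathcal{Q}$ against $M_1^{-}$ can be arranged as a sequence of normal trees along which $\sigma$'s $\rho$-preservation clause applies at every stage, including at maximal stages where two potential branches compete, which is where the zipper-argument consequence recorded at the end of Section~\ref{sec:suitable-premice} is the essential ingredient.
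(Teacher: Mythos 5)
There is a genuine gap, and it is in the direction you dismiss as easy. Your game makes the game quantifier do real adversarial work: Player II plays an iteration tree and Player I must play, interleaved with II's moves, a cofinal wellfounded $\rho$-preserving branch. A winning strategy for I is a \emph{continuous} (finite-lookahead) function from II's partial plays to branch commitments, and the mere existence of good branches for every countable tree on a genuine iterate $N$ of $M_1$ does not give such a continuous selection: the correct branch of a limit-length tree (the $\mathcal{Q}$-structure branch at short stages, or the strategy branch at a maximal stage) is not determined by any finite, or even proper initial, segment of the tree, and II can extend the tree after seeing I's partial branch commitments so as to steer the good branch away from them. So ``Player I plays $(N^{-},\nu)$ and reads off the required cofinal branches'' does not yield a winning strategy, and the forward inclusion $P_{u_1,\dots,u_n}\subseteq \game A$ is unproved. (There is a secondary mismatch as well: your payoff asks $L[x]$ to judge wellfoundedness of the branch model up to $u_{n+1}$, and $L[x]$ can overestimate the wellfounded part of a structure coded by a real it contains, which then infects your converse direction; and feeding the comparison tree to $\sigma$ has the same timing problem, since $\sigma$ only emits one cofinal branch interleaved with the play.)

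The paper's proof avoids all of this by using the game quantifier only to express the Martin cone quantifier: the game on $A_\alpha$ is fully cooperative (the two players jointly produce $x$), and the payoff is the Turing-invariant condition ``$L[x]\models$ there exist $(\mathcal{Q},\beta)\in HC$ with $\mathcal{Q}$ $(u_1,\dots,u_n,u_{n+1})$-iterable by $\rho$-value and $\alpha=\rho^{L_{u_{n+1}}[\mathcal{Q}]}(\beta,(u_1,\dots,u_n))$''. Here no player ever produces a branch: the iterability-by-$\rho$-value clauses quantify over trees of length $\leq\omega_1^{L[x]}$ inside $L[x]$ and require branches to exist in $L[x]^{\coll(\omega,u_{n+1})}$, so the whole payoff is first-order over $L[x]$ in the parameters $u_1,\dots,u_{n+1}$, i.e.\ $\Gamma_{1,n+1}$. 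The cone direction then follows from $\Sigma^1_1$-absoluteness together with Lemma~\ref{lem:rho_n_fixed} (any $N^{-}$ with $N^{-}\in HC^{L[x]}$ witnesses the statement in $L[x]$), and the converse is obtained by pseudo-comparing the witness $\mathcal{Q}$ with $M_1^{-}$ inside $L[x]$ for some $x\geq_T M_1^{\#}$, choosing in $L[x]^{\coll(\omega,u_{n+1})}$ a branch fixing $u_1,\dots,u_{n+1}$ and the relevant $\rho$-value, and observing that the common pseudo-iterate $L[\mathcal{R}]$ is a genuine iterate of $M_1$, so $\pi^{\mathcal{T}}_b(\beta)$ and $L[\mathcal{R}]$ witness $\alpha\in P_{u_1,\dots,u_n}$. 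If you want to salvage your approach, you would have to redesign the game so that branch existence is asserted inside a model of the produced real rather than played by a player — which is exactly the paper's formulation.
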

\begin{proof}
  We claim that for $\alpha < u_{n+1}$, 
  \begin{displaymath}
  \alpha \in P_{u_1,\dots,u_n}
  \end{displaymath}
iff for a cone of $x$, $L[x]$ satisfies that there is $(\mathcal{Q},\beta)$ such that $\mathcal{Q}$ is $(u_1,\dots,u_n, u_{n+1})$-iterable by $\rho$-value and 
\begin{displaymath}
  \alpha= \rho^{L_{u_{n+1}}[\mathcal{Q}]}(\beta,u_1,\dots,u_n)
\end{displaymath}

$\Rightarrow$: If $\alpha \in P_{u_1, \dots, u_n}$, then there is a countable iterate $N$ of $M_1$ and $\nu < \gamma^N_{\se{u_1, \dots, u_n}}$ such that $\alpha = \rho^N(\nu, (u_1, \ldots, u_n))$. For any $x$ satisfying $N^{-} \in HC^{L[x]}$, by Lemma~\ref{lem:rho_n_fixed}, $N^{-}$ is $(u_1, \dots,u_n, u_{n+1})$-iterable by $\rho$-value in $L[x]$. This verifies the $\Rightarrow$ direction. 

 $\Leftarrow$: Suppose $\alpha < u_{n+1}$
 and for a cone of $x \geq_T w$, $L[x]$ satisfies the above
 statement. Pick such an $x \geq_T M_1^{\#}$. Pick a witness
 $(\mathcal{Q}, \beta )\in HC^{L[x]}$ such that $\mathcal{Q}$ is
 $(u_1,\dots,u_n, u_{n+1})$-iterable by $\rho$-value in $L[x]$ and $\rho^{L_{u_{n+1}}[\mathcal{Q}]}(\beta, (u_1,\ldots, u_n)) = \alpha$. 
Working in $L[x]$, there is a pseudo-comparison $(\mathcal{T}, \mathcal{U})$ of $\mathcal{Q}$ and $M_1^{-}$ of length $\leq \omega_1^{L[x]}$, leading to a common pseudo-iterate $\mathcal{R}$ with $\delta^{\mathcal{R}} \leq \omega_1^{L[x]}$. Let $b$ be a branch choice for $\mathcal{T}$ in $L[x]^{\coll(\omega,u_{n+1})}$ such that $\pi_b^{\mathcal{T}}(u_1,\dots,u_{n+1}) = (u_1,\dots,u_{n+1})$ and $\pi_b^{\mathcal{T}}(\rho^{L_{u_{n+1}}[\mathcal{Q}]} ( \beta, (u_1,\dots,u_n)) ) = \rho^{L_{u_{n+1}}[\mathcal{Q}]} ( \beta, (u_1,\dots,u_n)) $. Then $\pi_b^{\mathcal{T}}(\beta) < \gamma^{L[\mathcal{R}]}_{\se{u_1,\dots,u_n}}$. But $L[\mathcal{R}]$ is a genuine iterate of $M_1$. 
$L[\mathcal{R}]$ and $\pi^{\mathcal{T}}_b(\beta)$ witnesses that $\alpha \in P_{u_1,\dots,u_n}$. 
\end{proof}

Zhu
in \cite{sharpI} proves the equality of pointclasses
\begin{displaymath}
  \game^2 (< \! \omega^2 \text{-}\Pi^1_1) =  < \! u_{\omega} \text{-}\Pi^1_3
\end{displaymath}
on subsets of $\mathbb{R}$. 
We produce a variant of this equality by allowing ordinal parameters. 
Recall the relevant definitions. If $\alpha$ is an ordinal and $A \subseteq \alpha \times X$, then put
\begin{displaymath}
x \in \Diff A \eqiv  \exists i < \alpha~(i \text{ is odd}\wedge \forall j < i ( (j,x) \in A) \wedge (i,x) \notin A).
\end{displaymath}
If $B$ is either a subset of $\mathbb{R}$ or a subset of $u_{\omega}$, $\alpha \leq u_{\omega}$, then  $B$ is said to be $\alpha$-$\Pi^1_3$ iff there is a $\Pi^1_3$ set $A$, a subset of either $\alpha \times \mathbb{R}$ or $\alpha \times u_{\omega}$ respectively,  such that $B = \Diff A$. 
The variant of this equality of
pointclasses on subsets of $u_{\omega}$ is:

\begin{lemma}
  \label{lem:pointclass_transfer}
Assume $\boldDelta{2}$-determinacy.  Let $B \subseteq u_{n+1}$ be
$\game \Gamma_{1,n}$. Then $B$ is $u_{n+2}\text{-}\Pi^1_3$. 
\end{lemma}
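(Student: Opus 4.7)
The plan is to adapt the pointclass equality $\game^2(<\!\omega^2\text{-}\Pi^1_1) = <\!u_\omega\text{-}\Pi^1_3$ from \cite{sharpI}, which is stated on subsets of $\mathbb{R}$, to the current ordinal-parameter setting. The outer game quantifier is handled by Martin and the inner $\Pi^1_3$-difference computation is handled by Zhu's argument; the two plug together cleanly once the level bookkeeping is done.

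First I apply Martin's upper bound $\Gamma_{1,n} \subseteq \game(\omega(n+1)\text{-}\Pi^1_1)$ cited in the excerpt, which by an outer $\game$ gives $\game\Gamma_{1,n} \subseteq \game^2(\omega(n+1)\text{-}\Pi^1_1)$. So it suffices to show that any $\game^2(\omega(n+1)\text{-}\Pi^1_1)$ subset of $u_{n+1}$ is $u_{n+2}\text{-}\Pi^1_3$.

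Second, I transfer between reals and ordinals via the $\WO_{n+1}$-coding. For $B \subseteq u_{n+1}$, set $B^{*} = \{w \in \WO_{n+1} : \sharpcode{w} \in B\}$. Since $\WO_{n+1}$ is $\Pi^1_2$ and $\sharpcode{\cdot}$ is the canonical $\Delta^1_3$ norm on it, $B$ being $\game^2(\omega(n+1)\text{-}\Pi^1_1)$ on $u_{n+1}$ implies that $B^{*}$ is $\game^2(\omega(n+1)\text{-}\Pi^1_1)$ as a set of reals (the extra quantification over $\WO_{n+1}$ and the $\sharpcode{\cdot}$-norm are absorbed into the $\Pi^1_1$-matrix and the outer games).

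Third, I invoke the level-by-level form of Zhu's equality from \cite{sharpI}: an inspection of the unravelling argument there gives $\game^2(\omega k\text{-}\Pi^1_1) \subseteq u_{k+1}\text{-}\Pi^1_3$ on subsets of $\mathbb{R}$. Taking $k=n+1$, $B^{*}$ is $u_{n+2}\text{-}\Pi^1_3$. Finally I pull back: a $u_{n+2}\text{-}\Pi^1_3$ presentation $B^{*} = \Diff A^{*}$ with $A^{*} \subseteq u_{n+2} \times \mathbb{R}$ yields the $u_{n+2}\text{-}\Pi^1_3$ presentation $B = \Diff A$ where $A \subseteq u_{n+2} \times u_{n+1}$ is obtained by composing with the canonical $\WO_{n+1}$-norm, using that this norm is $\Pi^1_3$-definable.

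The main obstacle is the careful bookkeeping of the difference-hierarchy level: one must verify that Zhu's construction in \cite{sharpI} gives precisely the correspondence $\omega k \mapsto u_{k+1}$, rather than a level shifted by one either way, and that the $\WO_{n+1}$-transfer neither inflates nor collapses the ordinal index of the $\Pi^1_3$-difference hierarchy. Both are routine but delicate, and in practice the proof is likely given by directly exhibiting a $u_{n+2}\text{-}\Pi^1_3$ normal form that mirrors the $\omega(n+1)$-alternating quantifier structure produced by unwinding $L[x] \models \varphi(x, u_1, \ldots, u_n)$ via Kechris--Martin (Theorem~\ref{thm:becker_kechris_martin}), rather than literally passing through the reals.
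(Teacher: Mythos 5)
Your proposal defers the entire content of the lemma to an unproved citation. The result in \cite{sharpI} is the equality $\game^2(<\!\omega^2\text{-}\Pi^1_1)=\ <\!u_\omega\text{-}\Pi^1_3$ for sets of reals, i.e.\ a statement about the union over all finite levels; the level-by-level refinement you invoke in your third step, $\game^2(\omega k\text{-}\Pi^1_1)\subseteq u_{k+1}\text{-}\Pi^1_3$, is not a stated theorem there, and ``an inspection of the unravelling argument'' is exactly the delicate bookkeeping that this lemma exists to carry out. Worse, the route you chose is in danger of being lossy by one level even if some refinement holds: replacing $\Gamma_{1,n}$ by $\game(\omega(n+1)\text{-}\Pi^1_1)$ via Martin's upper bound and then applying Martin again to the inner class lands you in $\game\Gamma_{1,n+1}$, and running the rank analysis at that level produces ordinals of the form $\tau^{L[z]}(z,\alpha,u_1,\dots,u_{n+1},u_{n+2})$, which are only bounded by $u_{n+3}$ --- giving $u_{n+3}\text{-}\Pi^1_3$, not the $u_{n+2}\text{-}\Pi^1_3$ bound claimed. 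Since the two Martin inclusions are not known to be sharp, you cannot recover the lost level afterwards. (Your second step also needs more care than ``absorbed into the $\Pi^1_1$ matrix'': the side condition $w\in\WO_{n+1}$ is $\Pi^1_2$ and $\sharpcode{\cdot}$ is only $\Delta^1_3$, so folding them into an $\omega(n+1)\text{-}\Pi^1_1$ payoff is not automatic; and the final pullback needs the $\Diff$ presentation of $B^{*}$ to be uniform in the choice of code $w$ for $\alpha$.)

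What the paper actually does --- and what your closing sentence correctly guesses but does not execute --- is to work directly with the $\Gamma_{1,n}$ presentation $(x,\alpha)\in A\eqiv L[x]\models\varphi(x,\alpha,u_1,\dots,u_n)$ and rerun the Kechris--Woodin determinacy-transfer machinery with ordinal parameters: one defines Kechris--Woodin non-determined sets relative to tuples $(\xi,\eta_1,\dots,\eta_n,\eta)$ of countable ordinals, proves the existence of a stable real using $\Sigma^1_2$-uniformization and $\Delta^1_2$-Turing determinacy, attaches to each stable $z$ the rank $\beta^z_\alpha=\tau^{L[z]}(z,\alpha,u_1,\dots,u_n,u_{n+1})$ of $z$ in a $\boldsigma{1}$ wellfounded relation (Kunen--Martin plus Shoenfield give the Skolem term $\tau$), notes that $\beta^z_\alpha<u_{n+2}$ and that $z\mapsto\beta^z_\alpha$ is $\Delta^1_3(\alpha)$ in the sharp codes, and shows that $\alpha$-ultrastable reals compute winning strategies in $G(A_\alpha)$. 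The difference representation $B=\Diff(E)$ with $E\subseteq u_{n+2}\times u_{n+1}$ is then read off from the $\Delta^1_3$ set $C=\{(\alpha,\gamma,z):\beta^z_\alpha=\gamma\}$, and this is where the index $u_{n+2}$ comes from --- as the bound on the minimal rank $\gamma_0$, not from any abstract pointclass computation. To repair your write-up you would have to either prove the level-by-level refinement you cite (with the ordinal-parameter coding issues settled), or carry out this direct argument; as it stands the key step is missing.
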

\begin{proof}
We follow closely the proof in \cite{sharpI}. Suppose that $B = \game
A$, $A \subseteq \mathbb{R} \times u_{n+1}$, and $A$ is
$\Gamma_{1,n}$. Fix a formula $\varphi$ such that
\begin{displaymath}
  (x,\alpha)\in A \eqiv L[x] \models \varphi( x, \alpha, u_1, \dots,
  u_n). 
\end{displaymath}

For countable ordinals $\xi , \eta_1, \dots, \eta_n ,\eta$ such that
$\max(\xi, \eta_1, \dots, \eta_n) < \eta$, we say that $M$
is a Kechris-Woodin non-determined set with respect to $(\xi,
\eta_1, \dots, \eta_n, \eta)$ iff
\begin{enumerate}
\item $M$ is a countable subset of $\mathbb{R}$;
\item $M$ is closed under join and Turing reducibility;
\item $\forall \sigma \in M~ \exists v \in M ~ L_{\eta}[\sigma \otimes
  v] \models \neg \varphi (\sigma \otimes v, \xi, \eta_1, \dots, \eta_n)$;
\item $\forall \sigma \in M ~ \exists v \in M  ~ L_{\eta}[v   \otimes
  \sigma ] \models \varphi (v \otimes \sigma, \xi, \eta_1, \dots, \eta_n)$.
\end{enumerate}
In clause 3, ``$\forall \sigma \in M$'' is quantifying over all strategies $\sigma$ for Player I that is coded in some member of $M$; $\sigma * v$ is Player I's response to $v$ according to $\sigma$, and $\sigma \otimes v = (\sigma * v) \oplus v$ is the combined infinite run. Similarly for clause 4, roles between two players being exchanged. 
Say that $z$ is $(\xi,\eta_1, \dots, \eta_n, \eta)$-stable iff $z$ is
not contained in any Kechris-Woodin non-determined set with respect to
$(\xi,\eta_1, \dots, \eta_n, \eta)$. $z$ is stable iff $z$ is
$(\xi,\eta_1, \dots, \eta_n, \eta)$-stable for all $\xi,\eta_1, \dots,
\eta_n, \eta$ such that $\max(\xi, \eta_1, \dots, \eta_n) < \eta <
\omega_1$. Being stable is a
$\Pi^1_2$-property. 
The following claim is extracted from the proof of the Kechris-Woodin
determinacy transfer theorem in \cite{KW_det_transfer} that
\begin{displaymath}
  \Delta^1_2 \text{-Determinacy} \Rightarrow
  \game(<\!\omega^2\text{-$\Pi^1_1$})\text{-Determinacy}. 
\end{displaymath}
\begin{claim}
  \label{claim:kechris-Woodin}
There is a stable real.
\end{claim}
\begin{proof}
  Suppose otherwise. The set of $(z, y) \in \mathbb{R}$ such that for
  some $(\xi, \eta_1, \dots, \eta_n, \eta)$, $y$
  codes a Kechris-Woodin non-determined set $M_y$ with
  respect to $(\xi, \eta_1, \dots, \eta_n, \eta)$ and such that $z \in
  M_y$ is $\Sigma^1_2$. By $\Sigma^1_2$-uniformization, this set is
  uniformized by a $\Sigma^1_2$ function $F$. $F$ is total by
  assumption. 
Thus, $F$ is $\Delta^1_2$. 
Denote the Kechris-Woodin non-determined set coded in $F(z)$ by
  $F^{*}(z)$.  
For any $z \in \mathbb{R}$,
  define $(\xi^z, \eta_1^z, \dots, \eta_n^z, \eta^z)$ as the lexicographically least
  tuple such that
  \begin{multline*}
    \exists y \leq_T z ( z \in F^{*}(y) \wedge F^{*}(y) \text{ is a
      Kechris-Woodin non-determined set}\\
    \text{with respect to } (\xi^z, \eta_1^z, \dots, \eta_n^z, \eta^z
    )). 
  \end{multline*}
Consider the game in which I produces $z_0, x_0$, II produces $z_1,
x_1$. Let $z = z_0 \oplus z_1$ and $x = x_0 \oplus x_1$. Then I wins iff
\begin{displaymath}
  L_{\eta^z} [ x] \models \varphi (x, \xi^z, \eta_1^z, \dots,
  \eta_n^z). 
\end{displaymath}
This game is $\Delta^1_2$, hence determined. Suppose with loss of
generality that I has a winning strategy $\bar{\sigma}$. 

We have $z \equiv_T z' \to (\xi^z, \eta_1^z, \dots, \eta_n^z, \eta^z)
= (\xi^{z'}, \eta_1^{z'}, \dots, \eta_n^{z'}, \eta^{z'})$. Since the
ordinals are wellfounded, we have
\begin{displaymath}
  \forall z ~\exists z' \geq_T z ~ \forall z'' \geq_T z' ~ (\xi^{z''},
  \eta_1^{z''}, \dots, \eta_n^{z''}, \eta^{z''} ) \geq_{\text{lex}}  (\xi^{z'},
  \eta_1^{z'}, \dots, \eta_n^{z'}, \eta^{z'} )
\end{displaymath}
 By $\Delta^1_2$-Turing determinacy,
we find $w_0 \geq_T \bar{\sigma}$ such that 
\begin{displaymath}
  \forall z \geq_T w_0 ~(\xi^z,
  \eta_1^z, \dots, \eta_n^z, \eta^z ) \geq_{\text{lex}}  (\xi^{w_0},
  \eta_1^{w_0}, \dots, \eta_n^{w_0}, \eta^{w_0} )
\end{displaymath}
Let $\sigma_0$
be a strategy for I such that $ \bar{\sigma} * (w_0, x_1) = (z_0, \sigma_0
* x_1)$ for some $z_0$. Of course, $\sigma_0 \leq _T w_0$.

Pick a
real $z \leq_T w_0$ such that $w_0 \in F^{*}(z)$ and $F^{*}(z)$
is a Kechris-Woodin non-determined set with respect to $ (\xi^{w_0},
  \eta_1^{w_0}, \dots, \eta_n^{w_0}, \eta^{w_0} )$. 
However, we shall produce a contradiction to clause 3 of the
definition of Kechris-Woodin non-determined set by proving that 
\begin{displaymath}
  \forall v \in F^{*}(z) ~
  L_{\eta^{w_0}}[\sigma_0 \otimes v] \models  \varphi (\sigma_0 \otimes v,
  \xi^{w_0}, \eta_1^{w_0}, \dots, \eta_n^{w_0}). 
\end{displaymath}
Suppose that $v \in F^{*}(z)$. Let $\bar{\sigma} * (w_0, v) = (z_0,
x_0)$. Then $x_0 = \sigma_0 * v$. Let $z' = w_0 \oplus z_0$. Since $\bar{\sigma}$ is winning, we have
\begin{displaymath}
  L_{\eta^{z'}}[ \sigma_0 \otimes v] \models \varphi (\sigma_0 \otimes
  v, \xi^{z'}, \eta_1^{z'}, \dots, \eta_n^{z'}).
\end{displaymath}
Thus, it suffices to show that $(\xi^{z'}, \eta_1^{z'}, \dots, \eta_n^{z'},
\eta^{z'}) = (\xi^{w_0}, \eta_1^{w_0}, \dots, \eta_n^{w_0},
\eta^{w_0})$. We have the $\geq_{\text{lex}}$ direction because $z'
\geq_T w_0$. To see the $\leq_{\text{lex}}$ direction, just note that
$F^{*}(z)$ contains $z'$ and is already a Kechris-Woodin non-determined set with respect
to $ (\xi^{w_0},
  \eta_1^{w_0}, \dots, \eta_n^{w_0}, \eta^{w_0} )$. This finishes the
  proof of Claim~\ref{claim:kechris-Woodin}.
\end{proof}

Let $<^{\xi, \eta_1, \dots, \eta_n, \eta}$ be the following
wellfounded relation on the set of $z$ which is $(\xi, \eta_1, \dots, \eta_n, \eta)$-stable:
\begin{align*}
  z' <^{\xi, \eta_1, \dots, \eta_n, \eta} z \eqiv  &  z \text{ is } (\xi, \eta_1, \dots, \eta_n, \eta)\text{-stable} \wedge z \leq_T z'  \wedge \\
  & \forall \sigma \leq_T z ~\exists v \leq_T z' ~ L_{\eta}[\sigma
    \otimes v] \models \neg \varphi (\sigma \otimes v, \xi, \eta_1,
    \dots, \eta_n)  \wedge \\
  &\forall \sigma \leq_T z ~\exists v \leq_T z' ~ L_{\eta}[v \otimes
    \sigma] \models  \varphi (v \otimes \sigma, \xi, \eta_1, \dots, \eta_n).
\end{align*}
The reason why $<^{\xi, \eta_1, \dots, \eta_n, \eta}$ is wellfounded is because otherwise, there would exist $(z_n:n<\omega)$ such that $z_0$ is $(\xi, \eta_1, \dots, \eta_n,\eta)$-stable and $z_{n+1}<^{\xi, \eta_1, \dots, \eta_n, \eta} z_n$ for each $n$, and thus one can build a Kechris-Woodin non-determined set
\begin{displaymath}
  \set{x}{\exists n (x \leq_T z_n)}.
\end{displaymath}
that contains $z_0$, a contradiction.  
If $z$ is $(\xi, \eta_1, \dots, \eta_n, \eta)$-stable, then $<^{\xi,
  \eta_1, \dots, \eta_n, \eta} \res \set{z'}{z' <^{\xi, \eta_1, \dots,
    \eta_n, \eta} z}$ is a $\boldsigma{1}$ wellfounded relation in the
code of $(\xi, \eta_1, \dots, \eta_n, \eta)$, hence has rank $<
\omega_1$ by Kunen-Martin. If $z$ is stable, let $f^z$ be the function that sends $(\xi, \eta_1, \dots, \eta_n, \eta)$ to the rank of $z$ in $<^{\xi, \eta_1, \dots,
    \eta_n, \eta} $. By Shoenfield absoluteness, there is a Skolem term $\tau$ in the language of set theory such that for all $z \in \mathbb{R}$, if $z$ is stable, then
\begin{displaymath}
  f^z(\xi, \eta_1, \dots, \eta_n, \eta) = \tau^{L[z]}(z, \xi, \eta_1, \dots, \eta_n, \eta).
\end{displaymath}
Let
\begin{displaymath}
  \beta^z_{\alpha} = \tau^{L[z]}(z, \alpha, u_1, \dots, u_n,  u_{n+1}).
\end{displaymath}
The function
\begin{displaymath}
  z \mapsto \beta^z_{\alpha}
\end{displaymath}
is $\Delta^1_3(\alpha)$ in the sharp codes. We say that $z$ is
$\alpha$-ultrastable iff $z$ is stable and $\beta^z_{ \alpha} =
\min\set{\beta^{z'}_{ \alpha}}{z' \text{ is stable}}$. The same
argument in \cite{sharpI} shows that:
\begin{claim}
\label{claim:ultrastable}
  If $z$ is $\alpha$-ultrastable, then $z$ computes a
  winning strategy in $G(A)$ for one  of the two players.
\end{claim}
\begin{proof}
  Suppose otherwise. Let $w \in \WO$ such that $\sharpcode{w} =
  \alpha$.  
For each $\sigma \leq_T z$ for either of the two
  Players in $G(A)$, find a defeat $y_{\sigma}$ of $\sigma$. Let $z'$
  be Turing above $w \oplus z$ and above $y_{\sigma}$ for any $\sigma \leq_T
  z$. 
By indiscernibility, whenever $(\xi, \eta_1, \dots, \eta_n, \eta)$ are
$L[z']$-indiscernibles and $\xi < \eta_i \eqiv \alpha < u_i$ and $\xi
= \eta_i \eqiv \alpha = u_i$ for any $1 \leq i \leq n$, we have
\begin{displaymath}
  z' < ^{\xi, \eta_1, \dots, \eta_n, \eta} z
\end{displaymath}
and hence 
\begin{displaymath}
  f^{z'}(\xi, \eta_1, \dots, \eta_n, \eta) < f^z(\xi, \eta_1, \dots,
  \eta_n, \eta).
\end{displaymath}
Therefore, $\beta^{z'}_{\alpha} < \beta^z_{\alpha}$, contradicting to
$\alpha$-ultrastableness of $z$.
\end{proof}
We then let
\begin{displaymath}
  (\alpha, \gamma, z) \in C
\end{displaymath}
iff $z$ is $\alpha$-stable and $\beta^z_{\alpha} = \gamma$. $C$ is
$\Delta^1_3$. Then
\begin{displaymath}
  \alpha \in B
\end{displaymath}
iff
\begin{multline*}
  \text{if $\gamma_0$ is the smallest such that $\exists z~(\alpha,
    \gamma_0, z) \in C$,}\\
\text{then }
\forall z ((\alpha, \gamma_0, z) \in C \to \exists \sigma \leq_T z ~\forall v ~(\alpha, \sigma \otimes v )\in A). 
\end{multline*}
Thus,  $B$ is $u_{n+2}\text{-}\Pi^1_3$ by the following
definition:
\begin{displaymath}
  B = \Diff(E),
\end{displaymath}
where $E \subseteq u_{n+2} \times u_{n+1}$ is given by: $(2\gamma,\alpha) \in E$ iff $\neg \exists z~(\alpha, \gamma_0,z) \in C$, and $(2\gamma+1, \alpha) \in E$ iff $\forall z ((\alpha, \gamma_0, z) \in C \to \exists \sigma \leq_T z ~\forall v ~(\alpha, \sigma \otimes v )\in A)
$.
\end{proof}

By Lemmas~\ref{lem:P_n_complexity}-\ref{lem:pointclass_transfer}, $P_{u_1,\dots,u_n}$ is $u_{n+3}$-$\Pi^1_3$. 
By Theorem~\ref{thm:becker_kechris_martin}, $P_{u_1,\dots,u_n} \in L_{\kappa_3}[T_2]$. 
Let
\begin{displaymath}
  f_n : \delta_n\to P_{u_1,\dots,u_n}
\end{displaymath}
be the order preserving enumeration of $P_{u_1,\dots,u_n}$. Then $f_n \in L_{\kappa_3}[T_2]$ and hence by Theorem~\ref{thm:becker_kechris_martin}, there is $\mu_n < u_{n+2}$ such that $f_n$ is $\Delta^1_3(\mu_n)$. We fix this $\mu_n$ and fix a $\Sigma^1_3$ set
\begin{displaymath}
  B_n \subseteq u_{n+2} \times (u_{n+1} \times u_{n+1})
\end{displaymath}
such that
\begin{displaymath}
  f_n (\alpha) = \beta \eqiv (\mu_n, ( \alpha, \beta)) \in B_n % \eqiv (\mu_n, \alpha, \beta) \notin C_n.
\end{displaymath} 
The role of $f_n$ is to compute $\pi_{N,\infty} (\alpha)$ for a
countable iterate $N$ of $M_1$ and $\alpha < \gamma^N_{\se{u_1, \dots, u_n}}$:
\begin{lemma}
  \label{lem:f-n-use}
Suppose that $N$ is a countable iterate of $M_1$ and $\alpha < \gamma^N_{\se{u_1, \dots, u_n}}$. Then
\begin{displaymath}
  f_n ( \pi_{N,\infty} (\alpha)) = \rho^N (\alpha, \se{u_1, \dots, u_n}). 
\end{displaymath}
\end{lemma}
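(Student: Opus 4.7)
The plan is to identify $f_n$ as the inverse of the map $\phi$ on $P_{u_1, \dots, u_n}$ defined by $\phi(\rho^{N'}(\nu, (u_1, \dots, u_n))) = \pi_{N',\infty}(\nu)$ for any countable iterate $N'$ of $M_1$ and any $\nu < \gamma^{N'}_{\se{u_1, \dots, u_n}}$. Once $\phi$ is shown to be a well-defined, strictly order-preserving bijection onto $\delta_n$, uniqueness of the order-preserving enumeration forces $\phi = f_n^{-1}$, and the lemma becomes just $\phi(\rho^N(\alpha, (u_1, \dots, u_n))) = \pi_{N, \infty}(\alpha)$.

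Write $\vec{u} = (u_1, \dots, u_n)$. First I would verify well-definedness and order-preservation. Given witnesses $\beta_i = \rho^{N_i}(\nu_i, \vec{u})$ for $i = 1, 2$, take a common countable iterate $P$ of $N_1$ and $N_2$; by Lemma~\ref{lem:rho_n_fixed}, $\beta_i = \rho^P(\pi_{N_i P}(\nu_i), \vec{u})$. Inspecting Definition~\ref{def:skolem_term} in the special case $k = k' = 0$, the triples $(\emptyset, \corner{J_{u_n}[P]}, \eta)$ form a strict linear order under $<^{P}_{u_1, \dots, u_n}$ as $\eta$ ranges over $\gamma^P_{\se{u_1, \dots, u_n}}$, so $\rho^P(\cdot, \vec{u})$ is strictly increasing there. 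Hence $\beta_1 < \beta_2$ iff $\pi_{N_1 P}(\nu_1) < \pi_{N_2 P}(\nu_2)$, and applying the order-preserving $\pi_{P, \infty}$ gives $\pi_{N_1, \infty}(\nu_1) < \pi_{N_2, \infty}(\nu_2)$; equality is handled analogously.

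For surjectivity it suffices to show that for any countable iterate $N$ of $M_1$ and any $\alpha < \gamma^N_{\se{u_1, \dots, u_n}}$, every $\gamma < \pi_{N,\infty}(\alpha)$ arises as $\phi(\beta)$ for some $\beta \in P_{u_1, \dots, u_n}$ with $\beta < \rho^N(\alpha, \vec{u})$. Since $M_{1,\infty}$ is the direct limit along the canonical system of countable iterates of $N$, choose a countable further iterate $N''$ of $N$ and $\gamma'' \in N''$ with $\gamma = \pi_{N'', \infty}(\gamma'')$. Writing $\pi_{N, \infty} = \pi_{N'', \infty} \circ \pi_{NN''}$ and using order-preservation of $\pi_{N'', \infty}$, we get $\gamma'' < \pi_{NN''}(\alpha)$. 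Iteration maps on $M_1$ fix each $u_i$ (the ingredient underlying Hjorth's computation that $\delta^{M_{1,\infty}} = u_\omega$), so by elementarity $\pi_{NN''}(\gamma^N_{\se{u_1, \dots, u_n}}) = \gamma^{N''}_{\se{u_1, \dots, u_n}}$, giving $\gamma'' < \gamma^{N''}_{\se{u_1, \dots, u_n}}$. Setting $\beta := \rho^{N''}(\gamma'', \vec{u})$, we have $\phi(\beta) = \gamma$, and by Lemma~\ref{lem:rho_n_fixed} plus monotonicity of $\rho^{N''}(\cdot, \vec{u})$, $\beta < \rho^{N''}(\pi_{NN''}(\alpha), \vec{u}) = \rho^N(\alpha, \vec{u})$, as required.

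The main obstacle I anticipate is the bookkeeping that $\pi_{NN''}$ fixes each $u_i$ and hence $\pi_{NN''}(\gamma^N_{\se{u_1, \dots, u_n}}) = \gamma^{N''}_{\se{u_1, \dots, u_n}}$; this relies on the precise behavior of $M_1$'s canonical strategy with respect to the uniform indiscernibles as developed in Hjorth's \cite{hjorth_boundedness_lemma}, but is standard enough to be invoked directly.
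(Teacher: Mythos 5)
Your proposal is correct and is essentially the paper's argument run in the inverse direction: the paper defines the map $\sigma \colon \pi_{N,\infty}(\alpha) \mapsto \rho^{N}(\alpha,(u_1,\dots,u_n))$, notes it is well defined and order preserving by comparison together with Lemma~\ref{lem:rho_n_fixed}, observes its range is exactly $P_{u_1,\dots,u_n}$, and concludes $\sigma = f_n$. Your only addition is the explicit verification (via a further iterate $N''$ and downward closure of the range of $\phi$) that the relevant set of ordinals is an initial segment, a point the paper's terse ``Therefore, $\sigma = f_n$'' leaves implicit, so the two proofs coincide in substance.
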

\begin{proof}
  Define  a map $\sigma$ sending $\pi_{N,\infty}(\alpha)$ to $\rho^N
  (\alpha, \se{u_1, \dots, u_n})$ for $N$ a countable iterate of $M_1$
  and $\alpha < \gamma^N_{\se{u_1, \dots, u_n}}$. By comparison and
  Lemma~\ref{lem:rho_n_fixed}, $\pi$ is well defined and order
  preserving. By definition, the range of $\sigma$ is exactly $P_{u_1,\dots,u_n}$. Therefore, $\sigma = f_n$. 
\end{proof}

Fix a $\Sigma^1_3$-formula
\begin{displaymath}
\varphi_{B_n}
\end{displaymath}
such that $\varphi_{B_n}(w, z_1,z_2)$ iff $w,z_1,z_2 \in \WO_{n+2}$ and $(\sharpcode{w}, (\sharpcode{z_1}, \sharpcode{z_2})) \in B_n$. 
Inside a model of the form $L[x]$, let
  \begin{align*}
    f_{n,u_1,\dots, u_{n+1}}^{\mu_n} =
    \{ (\alpha, \beta): ~& \exists w , z_1, z_2 ~( \varphi_{B_n}(w, z_1, z_2)  \wedge \\
    &\sharpcode{w} = \mu_n \wedge \sharpcode{z_1} = \alpha \wedge \sharpcode{z_2} = \beta \\
 &     \text{using $u_1, \dots, u_{n+1}$ to evaluate $\sharpcode{w}, \sharpcode{z_1}, \sharpcode{z_2}$} ). 
  \end{align*}
be the partial function defined from $u_1,\dots,u_{n+1},\mu_n$. By upward $\Sigma^1_3$ absoluteness, for any real $x$,
\begin{displaymath}
  (f^{\mu_n}_{n,u_1,\dots,u_{n+1}})^{L[x]} \subseteq f_n
\end{displaymath}
and for any $y \geq_T x$, 
\begin{displaymath}
   (f^{\mu_n}_{n,u_1,\dots,u_{n+1}})^{L[x]} \subseteq  (f^{\mu_n}_{n,u_1,\dots,u_{n+1}})^{L[y]} .
\end{displaymath}
Hence,
\begin{displaymath}
  f_n = \bigcup \set{  (f^{\mu_n}_{n,u_1,\dots,u_{n+1}})^{L[x]}  }{x \in \mathbb{R}}.
\end{displaymath}

% Fixing $n$, pick $j_i,m_i,k_i,z_i,F_i, \psi_i$ for $i < \omega$ such that
%   \begin{enumerate}
%   \item $m_i \leq n$;
%   \item $\psi_i$ defines $F_i $ over $L[z_i]$ from $\se{u_1,\dots, u_{n+1}}$ ;
%   \item $F_i$ is 1-1 on $A^{j_i}_{m_i,k_i}$;
%   \item $f_{n} = \bigcup_{i < \omega} F_i'' A^{j_i}_{m_i,k_i}$.
%   \end{enumerate}
% Pick $y_i$ for $i < \omega$ such that
% \begin{displaymath}
%   \varphi_0 (y_i, w,z_i, \gcode{\psi_i},  j_i, m_i, k_i).
% \end{displaymath}
% Let $x_n$ code $w$ and $(z_i,y_i: i < \omega)$. 

A countable iterate $N$ of $M_1$ is said to be $\alpha$-stable iff for
any further countable iterate $P$ of $N$ with iteration map $\pi_{NP}$, $\pi_{NP}(\alpha) = \alpha$. If $s$ is a finite set of ordinals, $N$ is $s$-stable iff $N$ is $\alpha$-stable for any $\alpha \in s$.  
The iterability of $M_1$ implies that for any finite set of ordinals
$s$, there is a countable iterate $N$ of $M_1$ which is $s$-stable. 
Let
\begin{displaymath}
G(\alpha) = \pi_{N,\infty}(\alpha)
\end{displaymath}
where $N$ is $\alpha$-stable. 
\begin{lemma}
  \label{lem:LT2_in_M1infty}
Assume $\boldDelta{2}$-determinacy.  Suppose that $\varphi$ is a $\Sigma^1_3$ formula. Then for any $\alpha < \delta_n$,
  \begin{displaymath}
    \exists v (v \in \WO_{\omega} \wedge \sharpcode{v} = \alpha \wedge \varphi(v))
  \end{displaymath}
iff
\begin{multline*}
  M_{1,\infty} ^{\coll(\omega, \delta_{1,\infty})}\models 
\exists \corner{\gcode{\tau},a^{\#}}~ (\varphi(\corner{\gcode{\tau},a^{\#}}) \wedge \\
 f^{G(\mu_n)}_{n,G(u_1),\dots,G(u_{n+1})} (\tau^{L[a]}(a, G(u_1),\dots,G(u_n))) =\rho^{M_{1,\infty} | G(u_{n+1})}(\alpha, (G(u_1),\dots, G(u_n))) .
\end{multline*}
% Here we denote $F_{\gcode{\psi}} = ( (F_{\gcode{\psi}})_0,(F_{\gcode{\psi}})_1 )$.
\end{lemma}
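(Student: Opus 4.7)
The plan is to use Lemma~\ref{lem:f-n-use} as the bridge between $\rho$-values in countable iterates and their interpretations inside $M_{1,\infty}^{\coll(\omega,\delta_{1,\infty})}$, combined with $\Sigma^1_3$-absoluteness supplied by Q-theory (Theorem~\ref{thm:becker_kechris_martin}). The key observation is that a sharp code $\corner{\gcode{\tau},a^{\#}}$ and its reinterpretation via the indiscernibles $G(u_1),\dots,G(u_n)$ inside $M_{1,\infty}^{\coll}$ are rigidly linked by the graph of $f_n$, so both sides of the biconditional express that the same formal sharp code witnesses $\varphi$ in their respective universes. Throughout, I will exploit that $f_n$ is $\Sigma^1_3(\mu_n)$-definable by $\varphi_{B_n}$, so the notation $f^{G(\mu_n)}_{n,G(u_1),\dots,G(u_{n+1})}$ inside $M_{1,\infty}^{\coll}$ is literally the same formula evaluated with the translated parameters.

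For the $\Rightarrow$ direction, given $v=\corner{\gcode{\tau},a^{\#}}$ with $\sharpcode{v}=\alpha$ and $\varphi(v)$ in $V$, I would pick a countable iterate $N$ of $M_1$ containing $a$ and stable at $\mu_n$ and at a preimage $\nu$ of $\alpha$ under $\pi_{N,\infty}$. The same formal code is available inside $M_{1,\infty}^{\coll}$, with value $\tau^{L[a]}(a,G(u_1),\dots,G(u_n))$. Applying Lemma~\ref{lem:f-n-use} in $V$ gives $f_n(\alpha)=\rho^N(\nu,(u_1,\dots,u_n))$, and elementarity of $\pi_{N,\infty}$ together with indiscernibility then identifies $\tau^{L[a]}(a,G(u_1),\dots,G(u_n))$ as the unique $f_n^{M_{1,\infty}}$-preimage of $\rho^{M_{1,\infty}|G(u_{n+1})}(\alpha,(G(u_1),\dots,G(u_n)))$, which is the required $f$-equation. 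The statement $\varphi(v)$ transfers upward from $V$ into $M_{1,\infty}^{\coll}$ because, by Q-theory, $\varphi(v)$ is a $\Sigma_1$ fact over $L_{\kappa_3^v}[T_2,v]$, an admissible structure over the Martin--Solovay tree $T_2$ that is available inside $M_{1,\infty}^{\coll}$.

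For the $\Leftarrow$ direction, I would run the argument in reverse. Given a witness $\corner{\gcode{\tau},a^{\#}}$ in $M_{1,\infty}^{\coll}$ satisfying the $f$-equation, order-preservation of $f_n$ together with Lemma~\ref{lem:f-n-use}, now applied to an appropriate iterate realizing the $M_{1,\infty}$-value $\tau^{L[a]}(a,G(u_1),\dots,G(u_n))$, forces $\tau^{L[a]}(a,u_1,\dots,u_n)=\alpha$ back in $V$. Then $\Sigma^1_3$-truth of $\varphi(v)$ is pulled down using the Q-theoretic description: the same admissible structure over $T_2$ computes $\varphi$ uniformly. The main obstacle I anticipate is handling reals $a \in M_{1,\infty}^{\coll}$ that are not in $V$, since one must check that $G(u_i)$ really behave as Silver indiscernibles for $L[a]$ inside the collapse extension and that the coding system $\WO_\omega$ and the $\Sigma^1_3$-machinery apply uniformly to such $a$; this should follow from the fact that $M_{1,\infty}^{\coll}$ carries sharps for all of its reals (inherited from the top part of $M_1$ after iterating out) together with Lemma~\ref{lem:rho_n_fixed}, but it is where the bulk of the bookkeeping lies.
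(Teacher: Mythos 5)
Your proposal stalls exactly where the paper's proof does its real work. The statement about $M_{1,\infty}^{\coll(\omega,\delta_{1,\infty})}$ is a forcing-relation statement over a model of height at least $u_\omega$: its generic extensions do not exist in $V$, they do not contain the arbitrary $V$-real $a$ occurring in the sharp code $v=\corner{\gcode{\tau},a^{\#}}$, and they are not known to compute $\Sigma^1_3$ truth or the structure $L_{\kappa_3^v}[T_2,v]$ correctly --- whether $M_{1,\infty}$ captures that much of the $T_2$-admissible world is essentially the theorem being proved, so your appeal to Q-theory to move $\varphi(v)$ ``upward into $M_{1,\infty}^{\coll}$'' (and to pull it back down in the converse direction) is circular. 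The paper avoids this by first fixing an $\se{\alpha,\mu_n}$-stable countable iterate $N$ with $\pi_{N,\infty}(\bar\alpha)=\alpha$ and using elementarity of $\pi_{N,\infty}$ together with Lemma~\ref{lem:f-n-use} to replace the $M_{1,\infty}$-forcing statement by the corresponding statement forced over $N$. Since $N$ is countable, its collapse extensions exist in $V$, are of the form $L[x]$, and have genuine $V$-reals; so in the $\Leftarrow$ direction the witness $v_0\in N[g]$ is a real of $V$, upward $\Sigma^1_3$-absoluteness gives $\varphi(v_0)$ in $V$, and $(f^{\mu_n}_{n,u_1,\dots,u_{n+1}})^{L[x]}\subseteq f_n$ plus injectivity of $f_n$ gives $\sharpcode{v_0}=\alpha$. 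The step you dismissed as ``bookkeeping'' is precisely this reduction; sharps inside the extension do not substitute for it.

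The $\Rightarrow$ direction also needs an idea absent from your sketch: a genericity iteration. A countable iterate of $M_1$ does not ``contain $a$'', so one cannot simply pick $N$ with $a\in N$; instead, writing $\varphi(v)$ as $\exists y\,\theta(v,y)$ with $\theta$ being $\Pi^1_2$, the paper takes the genuine witness $v_0$ with $\sharpcode{v_0}=\alpha$, a witness $y_0$ for $\theta$, and a real $v_1$ with $(f^{\mu_n}_{n,u_1,\dots,u_{n+1}})^{L[v_1]}(\alpha)=f_n(\alpha)$, iterates $N$ to $P$ so that $\corner{v_0,v_1,y_0}$ is $\mathbb{B}^P$-generic, absorbs everything into a $\coll(\omega,\delta^P)$-extension $L[v_2]$ where Shoenfield gives $\theta(v_0,y_0)$ and the $f$-equation, and then pulls the forced statement back to $N$ by elementarity, the relevant parameters being fixed by stability and Lemma~\ref{lem:rho_n_fixed}. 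Without this extender-algebra step your $V$-witness never reaches the mouse side, and your identification of $\tau^{L[a]}(a,G(u_1),\dots,G(u_n))$ as an ``$f_n^{M_{1,\infty}}$-preimage'' presupposes an agreement between $f^{G(\mu_n)}_{n,G(u_1),\dots,G(u_{n+1})}$ and the true $f_n$ inside the collapse extension of $M_{1,\infty}$ that is exactly what the lemma is designed to establish.
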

\begin{proof}
  Let $N$ be $\se{\alpha, \mu_n}$-stable such that  % $x_n$ is $\mathbb{B}^N$-generic over $N$ and
$\pi_{N,\infty}(\bar{\alpha}) =\alpha$. Then by Lemma~\ref{lem:f-n-use}, 
  \begin{displaymath}
    \rho^{N | u_{n+1}} (\bar{\alpha}, (u_1,\dots,u_n)) = f_n (\alpha).
  \end{displaymath}
By elementarity, it suffices to show that 
  \begin{displaymath}
    \exists v (v \in \WO_{\omega} \wedge \sharpcode{v} = \alpha \wedge \varphi(v))
  \end{displaymath}
iff
\begin{displaymath}
   N ^{\coll(\omega, \delta^N)} \models 
\exists v~ (   
f^{\mu_n}_{n,u_1,\dots,u_{n+1}} ( \sharpcode{v}) = \rho^{N | u_{n+1}} (\bar{\alpha}, (u_1, \dots, u_n) )
\wedge \varphi(v)  ),
\end{displaymath}
or in other words, iff 
\begin{displaymath}
   N ^{\coll(\omega, \delta^N)} \models 
\exists v~ (  
f^{\mu_n}_{n,u_1,\dots,u_{n+1}} ( \sharpcode{v}) =f_n(\alpha))
\wedge 
 \varphi(v)  ). 
\end{displaymath}

$\Leftarrow$: We have by assumption a $\coll(\omega, \delta^N)$-generic filter $g$ over $N$ and $v_0 \in N[g]$ such that 
\begin{displaymath}
  N[g] \models f^{\mu_n}_{n, u_1, \ldots, u_{n+1}} (\sharpcode{v_0} )= f_n(\alpha)  \wedge \varphi(v_0)  .
\end{displaymath}
By upward $\Sigma^1_3$ absoluteness, $\varphi(v_0)$ holds in $V$ and $f_n (\sharpcode{v_0}) = f_n (\alpha)$. Therefore, $\sharpcode{v_0} = \alpha$. $v_0$ verifies the existence quantifier in the conclusion of the $\Leftarrow$ direction. 

$\Rightarrow$: Let $\varphi (v)$ be $\exists y \theta (v,y)$, where $\theta$ is $\Pi^1_2$. Let $\sharpcode{v_0} = \alpha$ and $y_0$ be such that $\theta(v_0,y_0)$. Let $v_1$ be such that
\begin{displaymath}
    (f^{\mu_n}_{n,u_1,\dots,u_{n+1}})^{L[v_1]} (\alpha) = f_n (\alpha).  
\end{displaymath}
 Iterate $N$ to some $P$ so that $\corner{v_0,v_1,y_0}$ is $\mathbb{B}^P$-generic over $P$. Let $v_2$ be a real such that $L[v_2]$ is a $\coll(\omega, \delta^P)$-extension of $P$ and $\corner{v_0,v_1,y_0} \in L[v_2]$. Then
 \begin{displaymath}
   L[v_2] \models  
f^{\mu_n}_{n,u_1,\dots,u_{n+1}} ( \sharpcode{v_0}) = f_n(\alpha) \wedge \varphi(v_0).   
 \end{displaymath}
Thus, 
 \begin{displaymath}
  P ^{\coll(\omega, \delta^P)}\models  \exists v~
(f^{\mu_n}_{n,u_1,\dots,u_{n+1}} ( \sharpcode{v}) = \rho^{P | u_{n+1}} (\pi_{NP}(\bar{\alpha}), (u_1, \dots, u_n)) \wedge \varphi(v)).   
 \end{displaymath}
And pull it back via elementarity.%, using the fact that the iteration map from $N$ to $P$ fixes $\bar{\alpha}$. 
\end{proof}

\begin{theorem}
  \label{thm:LT_2_equal_to_M_1_infty}
Assume $\boldDelta{2}$-determinacy. Then  $  L_{u_{\omega} } [\mathcal{O}_{\Sigma^1_3}]$ and $ M_{1,\infty} | u_{\omega}$ have the same universe.
\end{theorem}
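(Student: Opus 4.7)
The plan is to prove the two inclusions separately, using Lemma~\ref{lem:LT2_in_M1infty} for $L_{u_\omega}[\mathcal{O}_{\Sigma^1_3}] \subseteq M_{1,\infty}|u_\omega$ and Lemma~\ref{lem:f-n-use} for the reverse.

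First I would show $L_{u_\omega}[\mathcal{O}_{\Sigma^1_3}] \subseteq M_{1,\infty}|u_\omega$. By Lemma~\ref{lem:LT2_in_M1infty} applied uniformly in $n$, for each $\Sigma^1_3$ formula $\varphi$ and each $\alpha<\delta_n$, the relation $(\gcode{\varphi},\alpha)\in\mathcal{O}_{\Sigma^1_3}$ is equivalent to a $\coll(\omega,\delta_{1,\infty})$-forced statement over $M_{1,\infty}$ whose parameters are $G(\mu_n), G(u_1),\ldots,G(u_{n+1})$ and the internal Skolem term $\rho$. After checking that these parameters lie below $\delta_{1,\infty}=u_\omega$ in $M_{1,\infty}$, the restricted forcing relation is definable over $M_{1,\infty}|u_\omega$ (the relevant forcing is of cardinality $u_\omega$ there, and the statements in play are local). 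Consequently $\mathcal{O}_{\Sigma^1_3}\cap(\omega\times u_n)\in M_{1,\infty}|u_\omega$ for every $n$, which makes $\mathcal{O}_{\Sigma^1_3}$ a definable class of $M_{1,\infty}|u_\omega$, and an easy induction along the Jensen hierarchy delivers the containment.

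For the reverse inclusion, I use that $f_n$ is $\Delta^1_3(\mu_n)$ with $\mu_n<u_{n+2}<u_\omega$, so $f_n\in L_{u_\omega}[\mathcal{O}_{\Sigma^1_3}]$; together with the observation from Section~\ref{sec:suitable-premice} that ``$N$ is a countable suitable premouse, $\rho$-iterable by $\rho$-value'' is $\Pi^1_3$, so sufficiently many such $N$ are decidable and coded inside $L_{u_\omega}[\mathcal{O}_{\Sigma^1_3}]$. For any such $N$, Lemma~\ref{lem:f-n-use} gives $\pi_{N,\infty}(\alpha)$ as the unique $\beta$ with $f_n(\beta)=\rho^N(\alpha,\{u_1,\ldots,u_n\})$. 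Letting $N$ and $n$ vary, the reconstructed images $\pi_{N,\infty}[\gamma^N_{\{u_1,\ldots,u_n\}}]$ cofinally exhaust $M_{1,\infty}|u_\omega$, and the extender sequence of $M_{1,\infty}|u_\omega$ is reassembled by pushing forward the extender sequences of the countable iterates through the now-recovered maps.

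The hardest part will be the reverse inclusion: verifying that the images $\pi_{N,\infty}[\gamma^N_{\{u_1,\ldots,u_n\}}]$ really exhaust $M_{1,\infty}|u_\omega$ as $N$ and $n$ vary, and that the reassembly recovers the \emph{extender sequence} (not merely the ordinal skeleton) uniformly definably from $\mathcal{O}_{\Sigma^1_3}$. A subsidiary point on the first inclusion is confirming that $G(\mu_n)<\delta_{1,\infty}$ inside $M_{1,\infty}$, which follows from the stability properties of countable iterates of $M_1$ (together with Lemma~\ref{lem:rho_n_fixed}) but deserves explicit verification.
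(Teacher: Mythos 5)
Your first inclusion is essentially the paper's, but the ``subsidiary point'' you flag is stated backwards: the parameters in Lemma~\ref{lem:LT2_in_M1infty} do \emph{not} lie below $\delta_{1,\infty}$. For a countable iterate $N$ we have $\delta^N<\omega_1=u_1$, so $\pi_{N,\infty}(u_1)>\pi_{N,\infty}(\delta^N)=\delta_{1,\infty}$, and hence $G(u_1),\dots,G(u_{n+1})$ (and in general $G(\mu_n)$) sit above $u_\omega$. This is harmless but must be handled differently: Lemma~\ref{lem:LT2_in_M1infty} makes $\mathcal{O}_{\Sigma^1_3}\cap(\omega\times\delta_n)$ definable over the full ZFC model $M_{1,\infty}$ (with parameters wherever they lie), hence an element of $M_{1,\infty}$; being a bounded subset of $u_\omega=\sup_n\delta_n$ (Hjorth), it lands in $M_{1,\infty}|u_\omega$ by acceptability, and then your induction along the Jensen hierarchy goes through. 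So that direction is fixable.

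The genuine gap is in your reverse inclusion. The theorem only asserts equality of universes, and the paper never attempts what you propose: to locate enough countable iterates inside $L_{u_\omega}[\mathcal{O}_{\Sigma^1_3}]$, recover the maps $\pi_{N,\infty}$, and ``reassemble the extender sequence.'' That plan is unsupported at its crucial points: $L_{u_\omega}[\mathcal{O}_{\Sigma^1_3}]$ has a thin set of reals, contains neither all relevant countable iterates of $M_1$ nor any fragment of the iteration strategy, and cannot obviously evaluate $\rho^N(\cdot,(u_1,\dots,u_n))$ or $\pi_{N,\infty}$ internally; your ``exhaustion'' and ``reassembly'' steps are exactly the unproved hard content, not details. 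What the paper does instead is a complexity computation mirroring Lemma~\ref{lem:P_n_complexity}: given $a\subseteq u_n$ in $M_{1,\infty}$, write $a=\{\alpha<u_n: M_{1,\infty}\models\varphi(\alpha,\alpha_0)\}$ with $\alpha_0<u_k$, and show $\alpha\in a$ iff for a cone of $x$, $L[x]$ sees some suitable $\mathcal{Q}$ which is $(u_1,\dots,u_{k+1})$-iterable by $\rho$-value together with $\beta,\beta_0$ whose $\rho$-values equal $f^{\mu_k}_{k,u_1,\dots,u_{k+1}}(\alpha)$, $f^{\mu_k}_{k,u_1,\dots,u_{k+1}}(\alpha_0)$ and with $L[\mathcal{Q}]\models\varphi(\beta,\beta_0)$; the backward direction runs a pseudo-comparison of $\mathcal{Q}$ with $M_1$ inside $L[x]$ for $x\geq_T M_1^{\#}$, picks a branch in $L[x]^{\coll(\omega,u_{k+1})}$ fixing the relevant ordinals, transfers $\varphi$ to the genuine iterate $L[\mathcal{R}]$, and applies Lemma~\ref{lem:f-n-use} to see the images are $\alpha,\alpha_0$. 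This exhibits $a$ as $\game\Gamma_{1,k+1}(\alpha_0)$, whence Lemma~\ref{lem:pointclass_transfer} makes it $u_{k+3}$-$\Pi^1_3(\alpha_0)$ and Theorem~\ref{thm:becker_kechris_martin} (Q-theory/Kechris--Martin) places it in $L_{u_\omega}[\mathcal{O}_{\Sigma^1_3}]$. Without this pointclass argument (or a genuine substitute for your exhaustion/reassembly claims), your proof of the hard direction does not close.
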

\begin{proof}
  The universe of $  L_{u_{\omega} } [\mathcal{O}_{\Sigma^1_3}]$ is a subset of  that of $ M_{1,\infty} | u_{\omega}$: By Lemma~\ref{lem:LT2_in_M1infty} and Hjorth \cite{hjorth_boundedness_lemma} that $\sup_{n<\omega} \delta_n = u_{\omega}$.

 The universe of $ M_{1,\infty} | u_{\omega}$ is a subset of that of  $  L_{u_{\omega} } [\mathcal{O}_{\Sigma^1_3}]$: Suppose $a \subseteq u_n$ is in $M_{1,\infty}$. Let $\alpha_0 < u_k$, $n<k<\omega$ and $\varphi$ be such that
 \begin{displaymath}
   a = \set{\alpha < u_n}{ M_{1,\infty} \models \varphi(\alpha,\alpha_0)}.
 \end{displaymath}
We show that $a$ has a $\game \Gamma_{1,k+1}(\alpha_0)$ definition:
\begin{displaymath}
  \alpha \in a
\end{displaymath}
iff for a cone of $x$, 
\begin{multline*}
  L[x] \models \exists \mathcal{Q}, \beta ,\beta_0 \in HC\\
  ( \mathcal{Q} \text{ is $(u_1, \ldots, u_{k+1})$-iterable by $\rho$-value} \wedge \\
f^{\mu_k}_{k,u_1,\dots, u_{k+1}} (\alpha) =   \rho^{L_{u_{k+1}}[\mathcal{Q}]} (\beta, u_1,\dots, u_k) \wedge\\
f^{\mu_k}_{k,u_1,\dots, u_{k+1}} (\alpha_0) =   \rho^{L_{u_{k+1}}[\mathcal{Q}]} (\beta_0, u_1,\dots, u_k) \wedge\\
L[\mathcal{Q}] \models \varphi (  \beta, \beta_0 )).
\end{multline*}

$\Rightarrow$: Suppose that $\alpha \in a$. Iterate $M_1$ to $N$ via a
countable iteration such that for some $\beta, \beta_0 \in N$,
$\pi_{N,\infty} (\beta, \beta_0) = (\alpha, \alpha_0)$. Let $x_0$ be a real coding $N^{-}$. 
Then for any $x \geq_T x_0$, 
$N^{-}$
is $(u_1, \dots, u_{k+1})$-iterable by $\rho$-value in $L[x]$. 
Let $x_1\geq_T x_0$ be a real such that
\begin{displaymath}
  L[x_1] \models f^{\mu_k}_{k,u_1,\dots, u_{k+1}} (\alpha) = f_k (\alpha) \wedge f^{\mu_k}_{k,u_1,\dots, u_{k+1}} (\alpha_0) = f_k (\alpha_0). 
\end{displaymath}
Then for any $x \geq_T x_1$, 
\begin{displaymath}
  L[x] \models f^{\mu_k}_{k,u_1,\dots, u_{k+1}} (\alpha) = f_k (\alpha) \wedge f^{\mu_k}_{k,u_1,\dots, u_{k+1}} (\alpha_0) = f_k (\alpha_0). 
\end{displaymath}
 By Lemma~\ref{lem:f-n-use}, 
\begin{multline*}
  L[x] \models (
f^{\mu_k}_{k,u_1,\dots, u_{k+1}} (\alpha) =   \rho^{L_{u_{k+1}}[N^{-}]} (\beta, u_1,\dots, u_k) \wedge\\
f^{\mu_k}_{k,u_1,\dots, u_{k+1}} (\alpha_0) =   \rho^{L_{u_{k+1}}[N^{-}]} (\beta_0, u_1,\dots, u_k) )
\end{multline*}
The assumption $\alpha \in a$ implies that $M_{1, \infty } \models
\varphi (\alpha, \alpha_0)$. By elementarity, $N \models
\varphi(\beta, \beta_0)$. $(N^{-}, \beta, \beta_0)$ plays the role of
$(\mathcal{Q}, \beta, \beta_0)$ in the existential quantifier of the
statement in $L[x]$.

$\Leftarrow$: Let $x_0 \geq M_1^{\#}$ and let  $\mathcal{Q}, \beta, \beta_0 \in HC^{L[x_0]}$ such that  
\begin{multline*}
  L[x_0] \models \mathcal{Q} \text{ is $(u_1, \ldots, u_{k+1})$-iterable by $\rho$-value} \wedge \\
f^{\mu_k}_{k,u_1,\dots, u_{k+1}} (\alpha) =   \rho^{L_{u_{k+1}}[\mathcal{Q}]} (\beta, u_1,\dots, u_k) \wedge\\
f^{\mu_k}_{k,u_1,\dots, u_{k+1}} (\alpha_0) =   \rho^{L_{u_{k+1}}[\mathcal{Q}]} (\beta_0, u_1,\dots, u_k) \wedge\\
L[\mathcal{Q}] \models \varphi (  \beta, \beta_0 ).
\end{multline*}
Thus,
\begin{displaymath}
  f_k(\alpha) = \rho^{L_{u_{k+1}}[\mathcal{Q}]} (\beta, u_1,\dots,
  u_k) \wedge f_k (\alpha_0) =   \rho^{L_{u_{k+1}}[\mathcal{Q}]} (\beta_0, u_1,\dots, u_k) .
\end{displaymath}
Pseudo-compare $\mathcal{Q}$ with $M_1^{\#}$ in $L[x_0]$, leading to a
common pseudo-normal-iterate $\mathcal{R}$ with $\delta^{\mathcal{R}}
\leq \omega_1^{L[x_0]}$. In $L[x_0]^{\coll(\omega, u_{k+1})}$, there
is a branch choice in the pseudo-normal-iteration on the
$\mathcal{Q}$-side whose branch map fixes
\begin{displaymath}
(u_1, \dots, u_{k+1},
\rho^{L_{u_{k+1}}[\mathcal{Q}]}(\beta, u_1, \dots, u_k),
\rho^{L_{u_{k+1}}[\mathcal{Q}]}(\beta_0, u_1, \dots, u_k).
\end{displaymath}
Let $(\gamma, \gamma_0)$
be the image of $(\beta, \beta_0)$ under this branch map. Then
\begin{displaymath}
  f_k(\alpha) = \rho^{L_{u_{k+1}}[\mathcal{R}]}(\gamma, u_1, \dots, u_k)
  \wedge f_k (\alpha_0) =   \rho^{L_{u_{k+1}}[\mathcal{R}]} (\gamma_0, u_1,\dots, u_k) 
\end{displaymath}
Since $L[\mathcal{Q}] \models \varphi(\beta, \beta_0)$, we have $L_{u_{k+1}}[\mathcal{Q}] \models \varphi (\beta, \beta_0)$ by
indiscernibility. Thus, by elementarity, 
\begin{displaymath}
  L_{u_{k+1}}[\mathcal{R}] \models \varphi (\gamma, \gamma_0).
\end{displaymath}
and by indiscernibility again, 
\begin{displaymath}
  L[\mathcal{R}] \models \varphi (\gamma, \gamma_0).
\end{displaymath}
But $L[\mathcal{R}]$ is a genuine iterate of $M_1$. Thus,
\begin{displaymath}
  M_{1, \infty} \models \varphi ( \pi_{L[\mathcal{R}], \infty}
  (\gamma), \pi_{L[\mathcal{R}], \infty} (\gamma_0 )). 
\end{displaymath}
By Lemma~\ref{lem:f-n-use}, $\pi_{L[\mathcal{R}], \infty}(\gamma) =
\alpha$ and $\pi_{L[\mathcal{R}], \infty}(\gamma_0) = \alpha_0$. Thus,
$\alpha \in a$. 

This finishes the verification of the $\game \Gamma_{1, k+1} (\alpha_0)$
definition of $a$. Hence, $a$ is $u_{k+3}\text{-}\Pi^1_3(\alpha_0)$ by Lemma~\ref{lem:pointclass_transfer}. Hence, $a \in L_{u_{\omega}}[\mathcal{O}_{\Sigma^1_3}]$.
\end{proof}

\section{$u_n$ is in $\ran(\pi_{M_1,\infty})$}
\label{sec:u_n}
This section proves an interesting result that
\begin{displaymath}
  \text{for any $n<\omega$, } u_n \in \ran(\pi_{M_1, \infty}).
\end{displaymath}

It requires an ingredient from $Q$-theory. 
A major feature of $Q$-theory is the discrepancy between $\Delta^1_3$-degrees and $Q_3$-degrees: The universal $\Pi^1_3$ subset of $\omega$ is in $L_{\kappa_3}[T_2]$. In the spirit of its proof,  in \cite[Lemma 8.2]{Q_theory}, we establish a series of results along the same line. 

Define $\Delta_1^{L_{\kappa_3}[T_2]}(T_2)$ set $W_n$ where 
\begin{displaymath}
  \gamma \in W_n
\end{displaymath}
iff there is a $\Sigma_1$-formula $\varphi$ and an ordinal $\alpha < u_{n}$ such that
\begin{displaymath}
  L_{\gamma}(T_2) \models \varphi (\alpha, T_2) \wedge \forall \gamma' < \gamma(  L_{\gamma'}(T_2) \models \neg\varphi (\alpha, T_2)).
\end{displaymath}
Let
\begin{displaymath}
  \nu_n = \ot (W_n). 
\end{displaymath}
$W_n$ is therefore $\Delta_1^{L_{\kappa_3}[T_2]}(T_2, \nu_n)$. 

\begin{lemma}
  \label{lem:W_n_order_type}
Assume $\boldDelta{2}$-determinacy.  $\nu_n$ equals to the supremum of
the lengths of $\Delta^1_3(<\!u_{n})$ wellorderings on $u_n$. 
\end{lemma}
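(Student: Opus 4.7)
The plan is to prove the two inequalities separately, using Theorem~\ref{thm:becker_kechris_martin} to convert between $\Sigma_1$-over-$L_\gamma(T_2)$ and $\Pi^1_3$ definability with ordinal parameters $< u_n$. The setup I would use: fix an enumeration $\varphi_0, \varphi_1, \dots$ of the $\Sigma_1$ formulas in the language of $L(T_2)$, and for $(i,\alpha) \in \omega \times u_n$ let $\gamma(i,\alpha)$ denote the least $\gamma$ with $L_\gamma(T_2) \models \varphi_i(\alpha, T_2)$, whenever such $\gamma$ exists. Then $W_n = \operatorname{ran}(\gamma)$.

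For the direction $\sup \geq \nu_n$: given $\xi < \nu_n = \operatorname{ot}(W_n)$, pick $(i_0,\alpha_1) \in \omega \times u_n$ with $\gamma(i_0,\alpha_1)$ the $\xi$-th element of $W_n$. Define $R_\xi$ on $\omega \times u_n$ by $(j,\beta) R_\xi (k,\eta)$ iff $\gamma(j,\beta)$ and $\gamma(k,\eta)$ both exist and are $\leq \gamma(i_0,\alpha_1)$, and either $\gamma(j,\beta) < \gamma(k,\eta)$ or $\gamma(j,\beta) = \gamma(k,\eta)$ with $(j,\beta) <_{\mathrm{lex}} (k,\eta)$. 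The condition $\gamma(j,\beta) < \gamma(k,\eta)$ unfolds to
\begin{displaymath}
  \exists \gamma \, \bigl( L_\gamma(T_2) \models \varphi_j(\beta, T_2) \wedge \forall \delta \leq \gamma \; L_\delta(T_2) \models \neg \varphi_k(\eta, T_2) \bigr),
\end{displaymath}
which is $\Sigma_1$ over $L_{\kappa_3}(T_2)$ in the parameters $\beta, \eta, \alpha_1 < u_n$, because the inner universal quantifier is bounded by the existentially witnessed $\gamma$ and the body is $\Delta_0$ there. By Theorem~\ref{thm:becker_kechris_martin}, both $R_\xi$ and its complement on its field are $\Pi^1_3(\alpha_1)$, so $R_\xi$ is $\Delta^1_3(<\!u_n)$. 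Identifying $\omega \times u_n$ recursively with a subset of $u_n$, $R_\xi$ becomes a $\Delta^1_3(<\!u_n)$ wellordering on $u_n$ of length $\geq \xi$.

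For the direction $\sup \leq \nu_n$: let $R$ be a $\Delta^1_3(\alpha_0)$ wellordering on $u_n$ with $\alpha_0 < u_n$. For each $\xi < |R|$, Theorem~\ref{thm:becker_kechris_martin} translates the statement ``there exists $\beta$ in the field of $R$ of $R$-rank exactly $\xi$'' into a condition first witnessed at some $\gamma(\xi) \in W_n$, using $\alpha_0, \xi < u_n$ as parameters. Refining the coding by packaging the rank ordinal $\xi$ itself into the $\Sigma_1$ formula (so that distinct ranks produce distinct witnessing pairs of formula index and ordinal parameter), the map $\xi \mapsto \gamma(\xi)$ becomes an order-preserving injection of $|R|$ into $W_n$, yielding $|R| \leq \operatorname{ot}(W_n) = \nu_n$.

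The main obstacle is the upper bound: the naive rank-to-witness map $\xi \mapsto \gamma(\xi)$ is only non-strictly monotone, because several new $\Sigma_1$ facts with different ordinal parameters can be first witnessed at the same level of the $L_\gamma(T_2)$ hierarchy, so one must design the parametrisation carefully to separate the ranks and keep the map strictly order-preserving. Once this refinement and the complexity analysis in the lower bound are in place, combining the two directions gives the claimed equality.
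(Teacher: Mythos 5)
Your lower bound ($\sup \geq \nu_n$) is essentially the paper's own argument: fix an element of $W_n$ witnessed by a pair $(\varphi_{i_0},\alpha_1)$, compare least witnessing levels among pairs witnessed by that stage, check that the relation and its complement are both $\Sigma_1$ over $L_{\kappa_3}(T_2)$ with ordinal parameters below $u_n$ (the universal quantifier being bounded by the existential witness), and apply Theorem~\ref{thm:becker_kechris_martin}. That half is fine.

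The upper bound, however, has a genuine gap, and it sits exactly where you flag ``the main obstacle.'' Two problems. First, the rank $\xi$ of an element of $R$ ranges over $|R|$, which is in general far above $u_n$ --- the whole content of the lemma is that $|R|$ can approach $\nu_n$ --- whereas membership in $W_n$ requires the ordinal parameter of the $\Sigma_1$ formula to lie below $u_n$. So ``using $\alpha_0,\xi<u_n$ as parameters'' is simply not available, and $\xi$ cannot be absorbed into the formula index either, since there are only countably many formulas. Second, even if you replace the rank $\xi$ by the unique element $\beta_\xi$ of that rank (which \emph{is} below $u_n$) and let $\gamma(\xi)$ be the least level at which $L_{\gamma}(T_2)$ computes $\mathrm{rk}_R(\beta_\xi)$, distinct formula--parameter pairs that happen to be first verified at the same level contribute the same single ordinal to $W_n$; your map is then only non-strictly monotone and yields no bound on $|R|$ at all. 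The missing idea is to run a $\Sigma_1$-recursion along $R$ on the \emph{elements} rather than the ranks: let $f(\alpha)$ be the least level $\xi$ (above a fixed $\xi_0$ by which the $\Delta^1_3$-definition of $R$ has stabilized) such that $L_{\xi}(T_2)$ already contains $f(\beta)$ for every $\beta$ that is $R$-below $\alpha$. Because ``$L_{\xi}(T_2)$ contains $f(\beta)$'' forces $f(\beta)<\xi$, this map is automatically \emph{strictly} order preserving; each value $f(\alpha)$ is the least witness of a $\Sigma_1$ statement with parameters $\alpha,\alpha_0<u_n$ and hence lies in $W_n$; and admissibility of $\kappa_3$ makes $f$ total on $u_n$. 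This is the paper's argument, and without it (or something equivalent) your upper bound does not close.
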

\begin{proof}
  Fix any $\gamma \in W_n$. Let $\alpha<u_n$ and let $\varphi$ be $\Sigma_1$ such that 
\begin{displaymath}
  L_{\gamma}(T_2) \models \varphi (\alpha, T_2) \wedge \neg \exists \gamma' (L_{\gamma'}(T_2) \models \varphi(\alpha, T_2)).
\end{displaymath}
$W_n \cap \gamma$ is then the length of a $\Delta^1_3(\alpha)$ prewellordering
\begin{displaymath}
\leq_A
\end{displaymath}
of a $\Delta^1_3(\alpha)$ subset
\begin{displaymath}
A \subseteq \mathrm{Fml}_{\Sigma_1} \times u_n
\end{displaymath}
where
\begin{displaymath}
  (\gcode{\psi}, \beta) \in A
\end{displaymath}
iff 
\begin{displaymath}
  L_{\kappa_3}(T_2) \models \exists \gamma' (L_{\gamma'}(T_2) \models (\psi (\beta, T_2) \wedge \neg \psi(\alpha, T_2) ))
\end{displaymath}
iff 
\begin{displaymath}
  L_{\kappa_3}(T_2) \models \forall \gamma' (L_{\gamma'}(T_2) \models (  \psi (\beta, T_2) \vee \neg \psi(\alpha, T_2) ),
\end{displaymath}
and for $(\gcode{\psi},\beta), (\gcode{\psi'},\beta') \in A$,
\begin{displaymath}
  (\gcode{\psi},\beta) \leq_A (\gcode{\psi'},\beta')
\end{displaymath}
iff the least $\gamma$ with $L_{\gamma}(T_2)\models \psi(\beta,T_2)$ is not greater than the least $\gamma$ with $L_{\gamma}(T_2) \models \psi'(\beta',T_2)$. From $\leq_A$ we can easily define a $\Delta^1_3(\alpha)$ wellordering on $u_n$ of the same order type. This shows one direction of the lemma.

On the other hand, we need to show that if $<^{*}$ is a $\Delta^1_3(<\!u_n)$-wellordering of $u_n$, then its length is smaller than $\nu_n$. We define a $\Sigma_1^{L_{\kappa_3}[T_2]}(T_2)$ partial function
\begin{displaymath}
  f
\end{displaymath}
by induction on $<^{*}$. 
Let $\varphi$ and $\psi$ be $\Sigma_1$ formulas such that $\alpha <^{*}
\beta$ iff $L_{\kappa_3}[T_2] \models \varphi(T_2, \alpha, \beta)$ iff
$L_{\kappa_3}[T_2] \models \neg \psi(T_2,\alpha, \beta)$. Let $\xi_0$ be
the smallest such that $L_{\kappa_0}(T_2) \models \forall
\alpha,\beta<u_n (\varphi(T_2, \alpha, \beta) \vee \psi(T_2, \alpha,
\beta))$.  
Suppose that $f (\beta) $ for $\beta <^{*} \alpha$ has been
defined. We let $f(\alpha)$ be the smallest $\xi> \xi_0$ such that
$L_{\xi}(T_2) \models ``f (\beta)$ is defined for any $\beta$
satisfying $\varphi(T_2, \beta, \alpha)$''. By admissibility, $f$ is a
total function from $u_n$ into $W_n$ and is order preserving with
respect to $<^{*}$ and $<$. This implies that the order type of $<^{*}$ is smaller than $\nu_n$. 
\end{proof}

\begin{lemma}
\label{lem:sigma1_to_pi1_in_para}
Assume $\boldDelta{2}$-determinacy. Fix $n<\omega$. If $A \subseteq u_n$ is $\Pi^1_3$, then $A$ is $\Delta^1_3(\nu_n)$, uniformly in the $\Pi^1_3$-definition of $A$.
\end{lemma}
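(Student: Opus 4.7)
The plan is to recast both $A$ and its complement (within $u_n$) as $\Sigma_1$-statements over $L_{\kappa_3}[T_2]$ with parameter $\nu_n$, then apply Theorem~\ref{thm:becker_kechris_martin} to each and conclude that both are $\Pi^1_3(\nu_n)$, so that $A$ is $\Delta^1_3(\nu_n)$. I start with the $\Sigma_1$-formula $\varphi$ produced by Theorem~\ref{thm:becker_kechris_martin} for $A$: $\alpha \in A$ iff $L_{\kappa_3}[T_2] \models \varphi(T_2, \alpha)$. By $\Sigma_1$-reflection and the definition of $W_n$, this is equivalent to
\[
\exists \gamma \in W_n \, (L_\gamma(T_2) \models \varphi(\alpha, T_2)),
\]
because the minimal witness $\gamma$ for $\varphi(T_2, \alpha)$, whenever one exists, belongs to $W_n$ by construction.

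Next, I would verify that $W_n$ is a \emph{set} in $L_{\kappa_3}[T_2]$, bounded below $\kappa_3$. The map $(\alpha, \gcode{\psi}) \mapsto$ least $\gamma$ with $L_\gamma(T_2) \models \psi(\alpha, T_2)$ is a $\Sigma_1$-definable partial function on the set $u_n \times \omega \in L_{\kappa_3}[T_2]$, so $\Sigma_1$-replacement in the admissible $L_{\kappa_3}[T_2]$ forces its range $W_n$ to be a set. Combined with the hypothesis that $W_n$ is $\Delta_1^{L_{\kappa_3}[T_2]}(T_2, \nu_n)$, this lets me rewrite $\alpha \in A$ as $\exists X\,(X = W_n \wedge \exists \gamma \in X \, L_\gamma(T_2) \models \varphi(\alpha, T_2))$ and dually $\alpha \in u_n \setminus A$ as $\exists X\,(X = W_n \wedge \forall \gamma \in X \, L_\gamma(T_2) \not\models \varphi(\alpha, T_2))$. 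Both expressions are $\Sigma_1$ over $L_{\kappa_3}[T_2]$ with parameter $\nu_n$, because ``$X = W_n$'' is $\Delta_1$-definable from $\nu_n$ and the inner quantifiers over $X$ are bounded once $X$ is fixed as a set.

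Finally, applying Theorem~\ref{thm:becker_kechris_martin} to both representations shows that $A$ and $u_n \setminus A$ are each $\Pi^1_3(\nu_n)$, whence $A$ is $\Delta^1_3(\nu_n)$; the translation is visibly uniform in $\varphi$, hence in the given $\Pi^1_3$-definition of $A$. The main technical obstacle I expect is the boundedness of $W_n$ in $\kappa_3$, which is exactly what converts the potentially unbounded quantifier ``$\forall \gamma \in W_n$'' into a bounded one; once this is handled via the $\Sigma_1$-replacement argument sketched above, the rest is routine bookkeeping in the admissible/pointclass dictionary provided by Q-theory.
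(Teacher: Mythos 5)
Your overall route is the same as the paper's: pass from the $\Pi^1_3$ definition of $A$ to a $\Sigma_1$ statement over $L_{\kappa_3}[T_2]$ via Theorem~\ref{thm:becker_kechris_martin}, observe that the least witnessing level of $\varphi(\alpha,T_2)$ for $\alpha<u_n$ always lands in $W_n$, use the fact that $W_n$ is a $\Delta_1^{L_{\kappa_3}[T_2]}(T_2,\nu_n)$ \emph{member} of $L_{\kappa_3}[T_2]$ to bound the witness search (the paper writes this as $\alpha\in A \eqiv L_{\sup(W_n)}(T_2)\models\varphi(\alpha,T_2)$, which is your ``$\exists\gamma\in W_n$'' clause together with its negation), and then apply Theorem~\ref{thm:becker_kechris_martin} to both the set and its complement. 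That skeleton is correct and visibly uniform in $\varphi$.

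The gap is in the step you yourself single out as the crux: the boundedness of $W_n$ below $\kappa_3$. Admissible sets do \emph{not} satisfy $\Sigma_1$-replacement for \emph{partial} $\Sigma_1$ functions. $\Sigma_1$-collection requires the hypothesis $\forall x\in a\,\exists y\,\phi(x,y)$ with $\phi$ being $\Sigma_1$, whereas ``$\gamma$ is the least witness for $\psi(\alpha,T_2)$, or no witness exists'' has an irreducibly $\Pi_1$ disjunct; the domain of your partial map is only a $\Sigma_1$ subclass of $u_n\times\omega$. The principle you invoke is genuinely false: in $L_{\omega_1^{CK}}$ the partial $\Sigma_1$ map sending $e\in\omega$ to the least level witnessing ``$e\in\mathcal{O}$'' has range cofinal in $\omega_1^{CK}$, so the exact analogue of $W_n$ one level down (least witnessing levels of $\Sigma_1$ facts about integers over the least admissible) is \emph{unbounded}. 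That $W_n$ is nevertheless bounded in $\kappa_3$ is a substantive piece of $Q$-theory --- the $u_n$-version of the theorem that the universal $\Pi^1_3$ subset of $\omega$ lies in $L_{\kappa_3}[T_2]$ (\cite[Lemma 8.2]{Q_theory}) --- and in this paper it is delivered by Lemma~\ref{lem:W_n_order_type}: $\nu_n=\ot(W_n)$ equals the supremum of the lengths of $\Delta^1_3(<\!u_n)$ wellorderings on $u_n$, hence is an ordinal below $\kappa_3$. Once $\nu_n<\kappa_3$ is in hand, your replacement argument can be repaired: the increasing enumeration of the $\Delta_1$ class $W_n$ is a \emph{total} $\Sigma_1$ function on the set $\nu_n$, so $\Sigma_1$-collection does apply and yields $W_n\in L_{\kappa_3}(T_2)$. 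You need to route the boundedness through Lemma~\ref{lem:W_n_order_type} (or the cited $Q$-theory result) rather than through abstract admissibility; the rest of your write-up then goes through as stated.
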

\begin{proof}
Suppose that for $\alpha < u_n$, 
\begin{displaymath}
  \alpha \in A \eqiv L_{\kappa_3}(T_2) \models \varphi (\alpha, T_2)
\end{displaymath}
where $\varphi$ is $\Sigma_1$. 
Note that $W_n$ is $\Delta_1^{L_{\kappa_3}(T_2)}(\nu_n,T_2)$ and in particular, $W_n \in L_{\kappa_3}(T_2)$. 
Then,
\begin{displaymath}
  \alpha \in A \eqiv L_{\sup(W_n)}(T_2) \models \varphi (\alpha, T_2).
\end{displaymath}
This definition of $A$ is uniformly $\Delta_1^{L_{\kappa_3}(T_2)}(\nu_n, T_2)$. 
\end{proof}

The next lemma defines $\nu_n$ from $\se{u_1,\dots,u_n}$ in $L[x]$ for a cone of $x$, uniformly. The defining formula is called $\varphi_{v = \nu_n}(v, u_1, \dots, u_n)$. 
\begin{lemma}
  \label{lem:nu_definable}
Assume $\boldDelta{2}$-determinacy. There is a formula in the language of set theory
\begin{displaymath}
  \varphi_{ v= \nu_n} ( v , u_1, \dots, u_n)
\end{displaymath}
such that 
  for a cone of $x$,  
  \begin{displaymath}
    L[x] \models \forall v ~ ( \varphi_{ v= \nu_n} (v, u_1,\dots,u_n) \eqiv v = \nu_n).
  \end{displaymath}
\end{lemma}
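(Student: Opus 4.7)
\begin{sketchofproof}
The plan is to use Lemma~\ref{lem:W_n_order_type}, which recasts $\nu_n$ as the supremum of the lengths of $\Delta^1_3(<\!u_n)$-wellorderings on $u_n$; this characterization avoids direct reference to $T_2$ and $\kappa_3$, and is therefore amenable to internal definition inside $L[x]$ for $x$ on a cone.

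First I would fix a canonical $L[x]$-internal rendering of ``a $\Delta^1_3(\alpha)$-wellordering of $u_n$'' for $\alpha<u_n$, using the $f^{\mu_m}_{m,u_1,\dots,u_{m+1}}$ machinery already introduced in Section~\ref{sec:full-direct-limit}: sharp-coded $\Sigma^1_3$-truth about ordinals below $u_{m+1}$ is computed inside $L[x]$ by these partial functions, and every $\Delta^1_3(\alpha)$-wellordering on $u_n$ is given by a pair of $\Sigma^1_3$ and $\Pi^1_3$ formulas agreeing on sharp codes with parameter $\alpha$. I would then let $\varphi_{v=\nu_n}(v,u_1,\dots,u_n)$ assert that $v$ is the supremum, as $\alpha$ ranges over ordinals $<u_n$ and formulas range over a recursive enumeration, of the lengths of all wellorderings of $u_n$ so recognized in $L[x]$.

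The second step is to verify that the unique $v$ in $L[x]$ satisfying this formula equals $\nu_n$ for a cone of $x$. The $\leq$-direction is upward $\Sigma^1_3$-absoluteness from $L[x]$ to $V$: any wellordering of $u_n$ recognized internally as $\Delta^1_3(\alpha)$ with $\alpha<u_n$ is genuinely a $\Delta^1_3(\alpha)$-wellordering in $V$ of the same length, hence of length $\leq\nu_n$ by Lemma~\ref{lem:W_n_order_type}. For the $\geq$-direction, given a genuine $\Delta^1_3(\alpha)$-wellordering of $u_n$ of length $\lambda<\nu_n$, I would iterate $M_1^{\#}$ to absorb a witnessing real into $L[x]$ (the cone condition supplies this absorption), so that the same definition realizes the same wellordering internally with the same length $\lambda$.

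The main obstacle is isolating the correct $L[x]$-internal rendering of ``$\Delta^1_3(\alpha)$-wellordering on $u_n$'' so that both directions of the comparison with $\nu_n$ are uniform in $x$; once the rendering is pinned down via the $f^{\mu_m}_{m,u_1,\dots,u_{m+1}}$ functions, the absoluteness arguments are standard Shoenfield/Kechris--Martin manipulations, with the cone assumption on $x$ used precisely to absorb the reals needed to witness the relevant $\Delta^1_3$-wellorderings internally and to apply Lemma~\ref{lem:sigma1_to_pi1_in_para} locally in $L[x]$.
\end{sketchofproof}
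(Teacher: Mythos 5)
You start from the right place --- Lemma~\ref{lem:W_n_order_type}'s characterization of $\nu_n$ as the supremum of lengths of $\Delta^1_3(<\!u_n)$-wellorderings of $u_n$ --- but the way you internalize it does not work. Your formula asks $L[x]$ to compute the supremum of the lengths of all wellorderings it ``recognizes'' via the $f^{\mu_m}_{m,u_1,\dots,u_{m+1}}$-style partial evaluations. Those evaluations only see sharp codes that are reals of $L[x]$; since $\mathbb{R}\cap L[x]$ is countable (every real has a sharp), only countably many ordinals below $u_n$ are named in $L[x]$, so each internally realized ``wellordering of $u_n$'' has countable field and countable length, and the internal supremum is a countable ordinal --- nowhere near $\nu_n\geq u_1=\omega_1$. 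Your $\geq$-direction compounds this: a single $\Delta^1_3(\alpha)$-wellordering of $u_n$ is not witnessed by one real (its field has $u_n$-many points, each needing its own sharp-code witnesses), and even if one such wellordering could be absorbed, the supremum is not attained, so you would need a single cone base absorbing witnesses for cofinally many wellorderings below $\nu_n$ with $u_n$-many parameters $\alpha$ --- which no single $x$ can supply. There is also a smaller problem in your $\leq$-direction: internal recognition of a pair of formulas as complementary does not transfer upward (only the $\Sigma^1_3$ half is upward absolute), so an internally ``recognized'' $\Delta^1_3(\alpha)$-wellordering need not correspond to a genuine one.

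The paper's proof avoids all of this by compressing the needed information into a single $\Sigma^1_3$ statement about a single sharp code. Using a universal $\Pi^1_3$-indexed coding $A$ of all $\Delta^1_3(<\!u_n)$ subsets of $u_n^2$, the statement ``every wellordering coded in $A$ has length less than $\sharpcode{w}$'' becomes a $\Sigma^1_3$ predicate $\psi_n(w)$ of $w\in\WO_{n+1}$, equivalent to $\sharpcode{w}\geq\nu_n$, and $\varphi_{v=\nu_n}$ says ``$v$ is least such that some $w\in\WO_{n+1}$ codes $v$ and satisfies $\psi_n(w)$''. Now only one witness must be put into the cone base: a code $w_0$ of $\nu_n$ together with a real $x_0$ witnessing the outer existential quantifier of $\psi_n(w_0)$. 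For $x\geq_T w_0\oplus x_0$, $L[x]$ sees $\psi_n(w_0)$, so the internal least value is $\leq\nu_n$; and upward $\Sigma^1_3$-absoluteness shows that no $w$ with $\sharpcode{w}<\nu_n$ can satisfy $\psi_n$ in $L[x]$, giving equality. Reworking your argument along these lines --- one $\Sigma^1_3$ sentence, one absorbed witness, minimality via upward absoluteness --- is the missing idea.
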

\begin{proof}
The $\Delta^1_3(<\!u_n)$ subsets of $u_n^2$ have a universal coding, indexed by a $\Pi^1_3$ set. 
That is, there is a $\Pi^1_3$ set $A$ consisting of $(\gcode{\varphi},\gcode{\psi},\alpha)$ satisfying that
  \begin{enumerate}
  \item $\varphi,\psi$ are ternary $\Pi^1_3$-formulas, uniform in the sharp codes in all coordinates, defining $a \subseteq u_n^3$ and $b \subseteq u_n^3$ respectively,
  \item $\alpha < u_n$, and
  \item $c_{(\gcode{\varphi},\gcode{\psi},\alpha)} \DEF \set{(\beta,\beta')}{(\alpha,\beta,\beta') \in a} = u_n^2 \setminus \set{(\beta,\beta')}{(\alpha,\beta,\beta') \in b}$,
  \end{enumerate}
and such that for any $\Delta^1_3(<\!u_n)$ subset $d \subseteq u_n^2$, there is $(\gcode{\varphi},\gcode{\psi},\alpha) \in A$ such that $c_{(\gcode{\varphi},\gcode{\psi},\alpha)} = d$. 
Therefore, $\nu_n$ is the smallest $\nu$ with the $\Sigma^1_3$-property that
\begin{quote}
  for any $(\gcode{\varphi},\gcode{\psi},\alpha) \in A$, if $c_{(\gcode{\varphi},\gcode{\psi},\alpha)}$ is a wellordering on $u_n$, then its length is smaller than $\nu_n$.
\end{quote}
Extract a $\Sigma^1_3$-formula 
\begin{displaymath}
\psi_n(w)
\end{displaymath}
from this $\Sigma^1_3$-property. That is, $\psi_n(w)$ holds iff
\begin{displaymath}
w \in \WO_{n+1}  \wedge   \sharpcode{w} \geq \nu_n.
\end{displaymath}
Pick $w_0 \in \WO_{n+1}$ with $\sharpcode{w_0} = \nu_n$ and pick $x_0$ witnessing the existence quantifier of the $\Sigma^1_3$-definition of $\psi_n(w_0)$. 
Then for any $x \geq w_0 \oplus x_0$, $L[x]$ satisfies
\begin{quote}
  ``$\nu_n$ is the smallest ordinal such that for some $w \in \WO_{n+1}$, $\sharpcode{w} = \nu_n$ using $(u_1,\dots, u_n)$ to evaluate $\sharpcode{w}$, and $\psi_n(w)$ holds''.
\end{quote}
This is the definition of $\varphi_{v = \nu_n}$. 
\end{proof}

\begin{lemma}
  \label{lem:mu-n-definable}
Suppose that $N$ is a countable iterate of $M_1$ such that the
iteration map on the main branch exists. Then for any $n$, $\nu_n$ is
uniformly definable over $N$ from $\se{u_1, \dots, u_n}$. 
\end{lemma}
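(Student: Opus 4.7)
The plan is to translate the ``cone of reals'' characterization of $\nu_n$ in Lemma~\ref{lem:nu_definable} into a forcing assertion inside $N$ using the extender algebra $\mathbb{B}^N$ at the Woodin cardinal $\delta^N$. By Lemma~\ref{lem:nu_definable} there is a formula $\varphi_{v = \nu_n}(v, u_1, \dots, u_n)$ and a real $x_0$ such that $L[x] \models \forall v\,(\varphi_{v = \nu_n}(v, u_1, \dots, u_n) \iff v = \nu_n)$ for every $x \geq_T x_0$.

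Working inside $N$, I would define uniformly in $n$ the formula
\begin{equation*}
  \Psi_n(v, w_1, \dots, w_n) \iff \forall p \in \mathbb{B}^N\ \exists q \leq p\ \bigl(q \force_{\mathbb{B}^N} L[\dot{x}] \models \varphi_{v = \nu_n}(\check{v}, \check{w}_1, \dots, \check{w}_n)\bigr),
\end{equation*}
where $\dot{x}$ is the canonical name for the extender-algebra generic real. This is a first-order formula over $N$ with ordinal parameters $v, w_1, \dots, w_n$, and does not depend on the specific iterate $N$. The goal is to show $N \models \Psi_n(v, u_1, \dots, u_n)$ iff $v = \nu_n$.

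The forward direction is where the main content lies. Given $p \in \mathbb{B}^N$, I would use the universality of the extender algebra together with the iterability of $N$ via its canonical strategy (internal to $N$): iterate $N$ by a genericity iteration to a countable iterate $N^*$ of $N$ so that a chosen $V$-real $y \geq_T x_0$ is $\mathbb{B}^{N^*}$-generic over $N^*$ with $\pi_{N, N^*}(p)$ in the derived filter. Because $y$ is in the cone, Lemma~\ref{lem:nu_definable} yields $L[y] \models \varphi_{v = \nu_n}(\nu_n, u_1, \dots, u_n)$; appealing to the indiscernibility argument of Lemma~\ref{lem:rho_n_fixed} so that the parameter tuple $(u_1, \dots, u_n)$ is handled uniformly across $\pi_{N, N^*}$, this reflects back to a condition $q \leq p$ in $\mathbb{B}^N$ forcing the statement, witnessing the density required. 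The converse direction is more routine: upward $\Sigma^1_3$-absoluteness of $\psi_n$ from Lemma~\ref{lem:nu_definable} combined with the uniqueness of the cone-identified value rules out any $v \neq \nu_n$ satisfying the dense-forcing assertion.

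The principal obstacle is the reflection step: cleanly verifying that a forcing assertion about $\mathbb{B}^N$-generics over $N$ is pinned down by the behavior of $V$-reals $y \geq_T x_0$ that are generic only over an iterate $N^*$ of $N$, and that the uniform-indiscernibility behavior of $u_1, \dots, u_n$ (in the sense of Lemma~\ref{lem:rho_n_fixed}) transports the parameter tuple faithfully across the iteration embedding $\pi_{N, N^*}$. This reliance on the internal iteration strategy of $N$ and on Lemma~\ref{lem:rho_n_fixed} is precisely why the hypothesis that $N$ is a countable iterate of $M_1$ with existing iteration map on the main branch is essential.
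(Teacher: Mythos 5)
Your overall architecture (iterate $N$ so that a real in the cone of Lemma~\ref{lem:nu_definable} becomes generic, reflect the statement back to $N$ by elementarity, and use upward $\Sigma^1_3$-absoluteness for uniqueness) is the same as the paper's, except that the paper forces with $\coll(\omega,\delta^P)$ over a single further iterate $P$ rather than running a genericity iteration into the extender algebra. That choice is not cosmetic: a $\coll(\omega,\delta^P)$-extension of $P$ \emph{is} a model $L[z]$ with $z$ coding $P^{-}$, hence lands in the cone outright, whereas an extender-algebra generic real $x$ need not compute $N^{-}$, and $L[x]$ (as opposed to $N[x]$) need not contain enough sharps to interpret $\WO_{n+1}$ or evaluate $\psi_n$ meaningfully; your density formulation only partially compensates for this.

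The genuine gap is in the step you flag as the ``principal obstacle'' but then attribute to Lemma~\ref{lem:rho_n_fixed}. When you pull the forced statement back from $N^{*}$ to $N$ along $\pi_{N,N^{*}}$, the parameters that must lie in $\ran(\pi_{N,N^{*}})$ and be fixed are not only $u_1,\dots,u_n$ but also $\nu_n$ itself; otherwise your $\Psi_n(v,u_1,\dots,u_n)$ defines $\pi_{N,N^{*}}^{-1}(\nu_n)$ over $N$, not $\nu_n$. Lemma~\ref{lem:rho_n_fixed} concerns $\rho$-values of ordinals below $\gamma^{N}_{\{u_1,\dots,u_n\}}$ and says nothing about $\nu_n$, which is just some ordinal below $u_{n+1}$ with no a priori reason to be fixed by the iteration map. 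The paper closes exactly this gap with a separate argument: elementarity gives $\bar\nu=\pi^{-1}(\nu_n)$ as the unique ordinal $N$'s forcing extension believes in, a collapse-generic extension $N[g]$ then contains some $w\in\WO_{n+1}$ with $\sharpcode{w}=\bar\nu$ and $N[g]\models\psi_n(w)$, upward $\Sigma^1_3$-absoluteness pushes $\psi_n(w)$ to $V$ so that $\bar\nu=\sharpcode{w}\ge\nu_n$, and since iteration maps are non-decreasing, $\bar\nu\le\pi(\bar\nu)=\nu_n$, whence $\bar\nu=\nu_n$. Without some version of this two-sided squeeze your proof establishes only that \emph{some} ordinal is uniformly definable over $N$ from $\{u_1,\dots,u_n\}$, not that it is $\nu_n$.
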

\begin{proof}
Let $P$ be a countable iterate of $N$ via the iteration map $\pi_{NP}$ such that the base of the cone
in Lemma~\ref{lem:nu_definable} is in a $\coll(\omega,
\delta^P)$-extension of $P$. Then 
  \begin{displaymath}
    P^{\coll(\omega, \delta^P)}\models \forall v ~ ( \varphi_{ v= \nu_n} (v, u_1,\dots,u_n) \eqiv v = \nu_n).
  \end{displaymath}
By elementarity, $\nu_n \in \ran(\pi_{NP})$.
Thus, if $g$ is $\coll(\omega, \delta^N)$-generic over $N$, there is $w
\in \WO_{n+1} \cap N[g]$ such that $\sharpcode{w} = \pi_{NP}^{-1}(\nu_n)$ and
$\psi_n(w)$ holds in $N[g]$. By $\Sigma^1_3$-upward absoluteness,
$\psi_n(w)$ holds in $V$. Thus,  $\sharpcode{w} \geq \nu_n$. Of
course, $\pi_{NP}$ is non-decreasing. Thus, $\sharpcode{w} = \nu_n =
\pi_{NP}(\nu_n)$. The uniform definition of $\nu_n$ is 
  \begin{displaymath}
    N^{\coll(\omega, \delta^N)}\models \forall v ~ ( \varphi_{ v= \nu_n} (v, u_1,\dots,u_n) \eqiv v = \nu_n).
  \end{displaymath}
\end{proof}

\begin{theorem}
  Assume $\boldDelta{2}$-determinacy. Then for any $n<\omega$,
  \begin{displaymath}
    u_n \in \ran (\pi_{M_1, \infty}). 
  \end{displaymath}
\end{theorem}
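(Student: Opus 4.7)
The plan is to proceed by induction on $n$, using Lemma~\ref{lem:mu-n-definable} as the engine and the identity $\nu_{n-1} = u_n$ as the bridge. The base case $n=1$, i.e.\ $u_1 = \omega_1 \in \ran(\pi_{M_1,\infty})$, is handled separately, since $\omega_1$ is identifiable with a canonical cardinal of $M_{1,\infty}$ already in the range. For the inductive step with $n \geq 2$, assume as inductive hypothesis that $u_1, \ldots, u_{n-1} \in \ran(\pi_{M_1,\infty})$.

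The first step is to establish $\nu_{n-1} = u_n$, using the characterization in Lemma~\ref{lem:W_n_order_type} of $\nu_{n-1}$ as the supremum of lengths of $\Delta^1_3(<\!u_{n-1})$-wellorderings on $u_{n-1}$. The inequality $\nu_{n-1} \leq u_n$ is immediate cardinal arithmetic: under $\boldDelta{2}$-determinacy, $u_{n-1}$ is a cardinal whose cardinal successor is exactly $u_n$, so any wellordering on $u_{n-1}$ has length strictly below $u_n$. For the reverse inequality, every ordinal $\beta < u_n$ admits a sharp-code representation $\beta = \tau^{L[x]}(x, u_1, \ldots, u_{n-1})$ for some Skolem term $\tau$ and some $x \in \WO$. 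From this representation one constructs a wellordering on $u_{n-1}$ of length $\beta$ whose graph is $\Delta^1_3$ uniformly in the real parameter $x$ together with ordinal parameters drawn from $\{u_1, \ldots, u_{n-2}\}$, all of which lie strictly below $u_{n-1}$, placing this wellordering in the class $\Delta^1_3(<\!u_{n-1})$.

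With $\nu_{n-1} = u_n$ in hand, the inductive step is a direct application of Lemma~\ref{lem:mu-n-definable} to $N = M_1$ itself (a trivial countable iterate of $M_1$). The formula $\varphi_{v=\nu_{n-1}}$ uniformly defines $u_n$ over $M_1^{\coll(\omega, \delta^{M_1})}$ from the parameters $u_1, \ldots, u_{n-1}$. Since each of those parameters lies in $\ran(\pi_{M_1,\infty})$ by the inductive hypothesis, elementarity of the iteration map yields a preimage of $u_n$ in $M_1$, so $u_n \in \ran(\pi_{M_1,\infty})$, completing the induction.

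The main obstacle will be the $\geq$ direction of the identity $\nu_{n-1} = u_n$: constructing, for each $\beta < u_n$, a $\Delta^1_3$-wellordering of $u_{n-1}$ of length $\beta$ whose ordinal parameters lie strictly below $u_{n-1}$, rather than requiring $u_{n-1}$ itself. This is handled via the sharp-code formalism of Section~\ref{sec:q-theory} and indiscernibility, exploiting that $\beta$ is already presented as a Skolem value over $L[x]$ at the indiscernibles $u_1,\ldots,u_{n-1}$, with $u_{n-1}$ itself used only as an input to $\tau$ and not as a free ordinal parameter of the defining formula. Once this identification is secured, the remainder of the argument is a routine elementarity unwinding of Lemma~\ref{lem:mu-n-definable}.
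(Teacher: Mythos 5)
Your proposal has two fatal problems, and the second one is the heart of why the theorem is not easy. First, the identity $\nu_{n-1}=u_n$ is false. $W_{n-1}$ is indexed by pairs (a $\Sigma_1$ formula, an ordinal $<u_{n-1}$), so it has at most $u_{n-1}$ elements and $\nu_{n-1}=\ot(W_{n-1})$ is an ordinal of cardinality at most $u_{n-1}$; the paper in fact needs $\nu_{n-1}<u_n$ strictly (e.g.\ Lemma~\ref{lem:nu_definable} picks $w_0\in\WO_n$ with $\sharpcode{w_0}=\nu_{n-1}$, and the main theorem of this section uses a $\WO_{n+4}$-code of $\nu_{n+3}$). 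Your argument for $\nu_{n-1}\geq u_n$ also confuses parameter classes: a wellordering of length $\beta=\tau^{L[x]}(x,u_1,\dots,u_{n-1})$ built from the sharp code is $\Delta^1_3(x)$ for a \emph{real} parameter $x$, which is not a member of $\Delta^1_3(<\!u_{n-1})$, the class with ordinal parameters below $u_{n-1}$; that distinction is exactly why $\nu_{n-1}$ is a ``small'' lightface ordinal. (Separately, under mere $\boldDelta{2}$-determinacy $u_n$ need not be $u_{n-1}^+$ at all: the hypothesis is consistent with CH, under which $u_2<\omega_2$.)

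Second, even if $u_n$ were uniformly definable over countable iterates $N$ from $u_1,\dots,u_{n-1}$, your elementarity step would not close the induction. Lemma~\ref{lem:mu-n-definable} is a statement about countable iterates; passing to the direct limit converts ``$\nu_k$ is the unique solution of $\varphi_{v=\nu_k}(\cdot,u_1,\dots,u_k)$ over $N^{\coll(\omega,\delta^N)}$'' into ``$G(\nu_k)$ is the unique solution of $\varphi_{v=\nu_k}(\cdot,G(u_1),\dots,G(u_k))$ over $M_{1,\infty}^{\coll(\omega,\delta_{1,\infty})}$'', where $G(\alpha)=\pi_{N,\infty}(\alpha)$ for stable $N$. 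The map $G$ is very far from the identity: already $G(u_1)=\pi_{M_1,\infty}(\omega_1)\geq u_\omega$, since $\pi_{M_1,\infty}$ maps $\delta^{M_1}$ cofinally into $u_\omega=\delta_{1,\infty}$. So what lands in $\ran(\pi_{M_1,\infty})$ for free is $G(u_n)=\pi_{M_1,\infty}(u_n)$, not $u_n$ itself; your inductive hypothesis ``$u_i\in\ran(\pi_{M_1,\infty})$'' does not mean $u_i=\pi_{M_1,\infty}(u_i)$, and the base case $\omega_1\in\ran(\pi_{M_1,\infty})$ is itself a nontrivial instance of the theorem rather than a canonical-cardinal triviality. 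The paper's proof supplies precisely the missing mechanism for recovering the actual ordinal $u_n$ from its image: the enumeration $f_n$ of $P_{u_1,\dots,u_n}$ is $\Delta^1_3(\nu_{n+3})$ by Lemma~\ref{lem:sigma1_to_pi1_in_para}, so (as in Lemma~\ref{lem:LT2_in_M1infty}) the graph of $G\res\delta_n$ is definable over $M_{1,\infty}$ from $G(u_1),\dots,G(u_{n+3}),G(\nu_{n+3})$; Lemma~\ref{lem:mu-n-definable} eliminates the parameter $G(\nu_{n+3})$; and then $u_n=(G\res\delta_n)^{-1}(G(u_n))$ is definable from parameters that are literally values of $\pi_{M_1,\infty}$. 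None of this machinery appears in your outline.
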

\begin{proof}
  Recall the function $f_n : \delta_n \to P_{u_1, \dots, u_n}$ in
  Section~\ref{sec:full-direct-limit}. We argued from
  Lemmas~\ref{lem:P_n_complexity}-\ref{lem:pointclass_transfer} that
  $P_{u_1, \dots, u_n}$ is $u_{n+3}\text{-}\Pi^1_3$. By
  Lemma~\ref{lem:sigma1_to_pi1_in_para}, $P_{u_1, \dots, u_n}$ is
  $\Delta^1_3(\nu_{n+3})$. Hence, $f_n$ is
  $\Delta^1_3(\nu_{n+3})$. A similar proof to
  Lemma~\ref{lem:LT2_in_M1infty} yields that for any $\beta <
  \delta_n$,  
\begin{multline*}
  M_{1,\infty} ^{\coll(\omega, \delta_{1,\infty})}\models 
 f^{G(\nu_{n+3})}_{n,G(u_1),\dots,G(u_{n+3})} (G(\beta)) =\rho^{M_{1,\infty} | G(u_{n+1})}(\beta, (G(u_1),\dots, G(u_n))) ,
\end{multline*}
where
\begin{align*}
  f^{\nu}_{n, \kappa_1, \dots, \kappa_{n+3}} =   \{ (\alpha, \beta): ~& \exists w , z_1, z_2 ~( \varphi_{B_n^{*}}(w, z_1, z_2)  \wedge \\
    &\sharpcode{w} = \nu \wedge \sharpcode{z_1} = \alpha \wedge \sharpcode{z_2} = \beta \\
 &     \text{using $\kappa_1, \dots, \kappa_{n+3}$ to evaluate $\sharpcode{w}, \sharpcode{z_1}, \sharpcode{z_2}$} ). 
\end{align*}
where $\varphi_{B_n^{*}}(w,z_1, z_2)$ is a $\Sigma^1_3$-defining formula of the $\Sigma^1_3$ set
\begin{displaymath}
  B_n^{*} \subseteq u_{n+4} \times (u_{n+1}
\times u_{n+1})
\end{displaymath}
such that $\varphi(w,z_1,z_2)$ iff $w,z_1,z_2 \in \WO_{n+4} \wedge (\sharpcode{w},\sharpcode{z_1},\sharpcode{z_2}) \in B_n^{*}$, and  
\begin{displaymath}
  f_n(\alpha) = \beta \eqiv (\nu_{n+3}, (\alpha, \beta)) \in B_n^{*}. 
\end{displaymath}
In particular,
as  $u_n < \delta_n$ by Hjorth in   \cite{hjorth_boundedness_lemma} ,
$u_n$ is definable over $M_{1, \infty}$ from $\{G(u_1), \dots,
  G(u_{n+3}), G(\nu_{n+3})\}$. 
By Lemma~\ref{lem:mu-n-definable}, $G(\nu_{n+3})$ is definable over
$M_{1,\infty}$ from $\{G(u_1), \dots, G(u_{n+3})\}$. Thus, $u_n$ is definable over $M_{1, \infty}$ from $\{G(u_1), \dots,
  G(u_{n+3})\}$. Finally, because $G(u_i) = \pi_{M_1, \infty}(u_i)$ for any
  $i$, $u_n \in \ran(\pi_{M_1, \infty})$. 
\end{proof}

\section{Open questions}
\label{sec:open-questions}

An interesting question is the indiscernibility of $(u_n:n\geq 3)$ in $M_{1,\infty}$.
\begin{conjecture}
  Suppose $A \subseteq u_{\omega}$ is in $M_{1,\infty}$. Then there is $m<\omega$ such that either 
  \begin{displaymath}
    \set{u_n}{m<n<\omega} \subseteq A
  \end{displaymath}
or 
  \begin{displaymath}
    \set{u_n}{m<n<\omega} \cap  A = \emptyset.
  \end{displaymath}
\end{conjecture}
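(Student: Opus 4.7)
The plan is to exploit the main theorem to translate $A$ into definitional data over $L_{\kappa_3}[T_2]$ and reduce the conjecture to an eventual-constancy statement for the uniform indiscernibles.

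First I would reduce to the case $A$ is unbounded in $u_\omega$: if $A \subseteq u_{m_0}$ for some $m_0$, the second alternative of the conjecture holds with $m = m_0$. So assume $A$ is unbounded. By Theorem~\ref{thm:LT_2_equal_to_M_1_infty}, $A \in L_{u_\omega}[\mathcal{O}_{\Sigma^1_3}]$, so $A \in L_{u_{n_0}}[\mathcal{O}_{\Sigma^1_3} \cap u_{n_0}]$ for some $n_0 < \omega$. By Theorem~\ref{thm:becker_kechris_martin}, there is a $\Sigma_1$-formula $\varphi$ and an ordinal $\nu < u_{n_0}$ such that for all $\alpha < u_\omega$,
\begin{equation*}
\alpha \in A \iff L_{\kappa_3}[T_2] \models \varphi(\alpha, T_2, \nu).
\end{equation*}
It then suffices to show that for all sufficiently large $n$, the truth value of $\varphi(u_n, T_2, \nu)$ in $L_{\kappa_3}[T_2]$ is independent of $n$.

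Next I would translate this to the mouse side. Let $N$ be a countable iterate of $M_1$ and $\bar{A} \in N$ with $\pi_{N,\infty}(\bar{A}) = A$. By the theorem of Section~\ref{sec:u_n}, $u_n \in \ran(\pi_{M_1,\infty})$ for each $n$, so $\xi_n := \pi_{N,\infty}^{-1}(u_n) \in N$ is well-defined, and by elementarity $u_n \in A \iff \xi_n \in \bar{A}$. The conjecture thus reduces to showing that $\xi_n \in \bar{A}$ is eventually constant in $n$.

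The main obstacle will be proving the homogeneity of $(\xi_n)_{n > n_0}$ over $N$ with respect to $\bar{A}$. My approach combines: (i) the uniform definability of $\xi_n$ from $(\xi_1,\dots,\xi_{n+3})$, read off from Lemma~\ref{lem:mu-n-definable} and the argument of Section~\ref{sec:u_n}, so that the $\xi_n$'s are produced by a uniform definable recursion; (ii) iterability of $N$, to compare $N$ with further iterates via a pseudo-normal-iterate comparison as in the proof of Theorem~\ref{thm:LT_2_equal_to_M_1_infty}, arranging maps that shift $\xi_n$ to $\xi_{n+1}$ while fixing $\bar{A}$ up to the image in a common iterate; and (iii) an indiscernible-shifting step that converts the uniform recursion defining $\xi_n$ into eventual constancy of $\bar A$-membership. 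Equivalently, on the descriptive set-theoretic side, a failure of eventual constancy would yield a $\Sigma_1^{L_{\kappa_3}[T_2]}(T_2,\nu)$-definable partition of a tail of $\omega$, hence a definable $\omega$-cofinal sequence in $u_\omega$ obtained without direct appeal to $\mathcal{O}_{\Sigma^1_3}$; reconciling this with the ZFC-like behavior of $L_{u_\omega}[\mathcal{O}_{\Sigma^1_3}]$ established in the introduction is the crux.
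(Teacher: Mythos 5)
First, note that this statement is not proved in the paper at all: it appears verbatim as a \emph{Conjecture} in the final section ``Open questions,'' so there is no proof of record to compare against, and any complete argument here would be a new result going beyond the paper.

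Your proposal does not close the gap. The reductions in the first half are fine but contentless relative to the difficulty: passing from $A$ to a $\Sigma_1$ definition over $L_{\kappa_3}[T_2]$ with an ordinal parameter $\nu$, or to $\bar{A}\in N$ with $u_n\in A\iff \xi_n\in\bar{A}$ where $\xi_n=\pi_{N,\infty}^{-1}(u_n)$, merely restates the conjecture in equivalent forms; the entire problem is the ``eventual constancy'' you defer to steps (ii)--(iii). There is no result in the paper (nor a standard one) producing iteration maps that ``shift $\xi_n$ to $\xi_{n+1}$ while fixing $\bar{A}$'': the $u_n$ are not known to be indiscernible over $M_{1,\infty}$ in any sense --- indeed $u_1=\omega_1$ and $u_2=\omega_2$ are provably not interchangeable with the later $u_n$, and the theorem of Section 5 only shows each $u_n$ individually lies in $\ran(\pi_{M_1,\infty})$, which if anything suggests the $u_n$ are highly distinguishable from one another by parameters in the range of the direct limit map. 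Your closing heuristic is also flawed: a failure of eventual constancy gives you the set $\{\,u_n : u_n\in A\,\}$, an $\omega$-sequence cofinal in $u_\omega$, but this set is computed using the external sequence $(u_n:n<\omega)$, which is \emph{not} an element of $M_{1,\infty}|u_\omega$ (the model thinks $u_\omega=\delta_{1,\infty}$ is Woodin, hence regular). So no contradiction with the ZFC-ness of $L_{u_\omega}[\mathcal{O}_{\Sigma^1_3}]$ arises; the paper's remark about $\mathcal{O}_{\Sigma^1_3}$ not being definable over that universe is making exactly this point. The crux you identify is real, but nothing in your outline supplies a mechanism for it, so the proposal is a restatement of the open problem rather than a proof.
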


The $\kappa_3^x$ ordinal in \cite{Q_theory} might have an explanation via inner model theory. A candidate is the sequence $((u_n^{+})^{M_{1,\infty}(x)} : n<\omega)$  modulo the Fr\'{e}chet filter.
\begin{conjecture}
  $\kappa_3^x \leq \kappa_3^y$ iff there is $m<\omega$ such that for any $m<n<\omega$,
  \begin{displaymath}
    (u_n^{+})^{M_{1,\infty}(x)}  \leq (u_n^{+})^{M_{1,\infty}(y)} .
  \end{displaymath}
\end{conjecture}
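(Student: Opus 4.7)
The plan is to reduce both sides of the conjecture to a common quantity, namely a relativized version of the ordinals $\nu_n$ from Section~5.

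First I would relativize the machinery of Sections~4--5 to a real parameter $x$: define $M_1(x)$, its direct limit $M_{1,\infty}(x)$, and the relativized ordinals $\nu_n^x$, which by the relativization of Lemma~\ref{lem:W_n_order_type} equal the supremum of lengths of $\Delta^1_3(x,\,<\!u_n)$-wellorderings of $u_n$. The preceding theorem, relativized, gives $u_n\in\ran(\pi_{M_1(x),\infty})$, and an extension of its proof should yield the identification $(u_n^+)^{M_{1,\infty}(x)}=\nu_n^x$: the idea is to show that ordinals in $[u_n,\nu_n^x)$ are precisely those represented as iterated $\rho$-values for parameters from $M_1(x)$ below the iterate's successor of $u_n$, and then apply Lemma~\ref{lem:f-n-use} together with the indiscernibility argument of Lemma~\ref{lem:rho_n_fixed}. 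With this identification in hand, the right-hand side of the conjecture translates into the purely descriptive statement: $\nu_n^x\leq\nu_n^y$ for cofinitely many $n$.

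Second, I would establish the equivalence $\kappa_3^x\leq\kappa_3^y$ iff $\nu_n^x\leq\nu_n^y$ for cofinitely many $n$. The forward direction is routine from the relativized Theorem~\ref{thm:becker_kechris_martin}: an inclusion of admissible closures $L_{\kappa_3^x}[T_2,x]\subseteq L_{\kappa_3^y}[T_2,y]$, suitably interpreted, implies that every $\Delta^1_3(x,\,<\!u_n)$-wellordering of $u_n$ is also $\Delta^1_3(y,\,<\!u_n)$, so $\nu_n^x\leq\nu_n^y$ for every $n$. For the backward direction, I would use that every $\Sigma_1$-definable ordinal of $L_{\kappa_3^x}[T_2,x]$ is bounded by some $\nu_n^x$ (since the relativized $W_n^x$ cofinally exhaust $\kappa_3^x$), so the inclusion of admissible hulls can be reconstructed level by level from tail comparisons $\nu_n^x\leq\nu_n^y$.

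The main obstacle I expect is the backward direction of this second step: given arbitrary $\Sigma_1$ data over $L_{\kappa_3^x}[T_2,x]$, one has to build a corresponding witness over $L_{\kappa_3^y}[T_2,y]$, and the index $n$ for which the control $\nu_n^x\leq\nu_n^y$ is needed depends on the parameters of the $\Sigma_1$ formula, so one must invoke a $\Sigma_1$-collection/reflection step to push the whole construction into a single $W_n$-level. A secondary but nontrivial point is verifying $(u_n^+)^{M_{1,\infty}(x)}=\nu_n^x$ cleanly; this hinges on refining the pseudo-comparison arguments of Section~4 (together with Lemma~\ref{lem:mu-n-definable}) to show that countable iterates of $M_1(x)$ cofinally realize all $\Delta^1_3(x,\,<\!u_n)$-wellorderings of $u_n$, or equivalently that the relativized $P_{u_1,\dots,u_n}$ is cofinal in $\nu_n^x$ above $u_n$. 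Both points look tractable in principle but constitute the heart of the argument.
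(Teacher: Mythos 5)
This statement is posed in Section~6 of the paper as an \emph{open conjecture}; the paper contains no proof of it, so there is nothing to compare your proposal against. Judged on its own terms, your proposal does identify what is presumably the intended reduction --- relativize the main theorem to get $(u_n^{+})^{M_{1,\infty}(x)} = (u_n^{+})^{L_{\kappa_3^x}[T_2,x]}$ and then try to recover $\kappa_3^x$ from the sequence of these successors --- but both halves of your second step contain genuine errors, and they are exactly where the difficulty of the conjecture lives.

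In the forward direction you pass from $\kappa_3^x \leq \kappa_3^y$ to ``an inclusion of admissible closures $L_{\kappa_3^x}[T_2,x] \subseteq L_{\kappa_3^y}[T_2,y]$.'' This does not follow: $\kappa_3^x \leq \kappa_3^y$ is a comparison of two ordinals and gives no containment between structures built over different reals ($x$ need not belong to $L_{\kappa_3^y}[T_2,y]$ at all), so there is no reason a $\Delta^1_3(x,<\!u_n)$ wellordering should be $\Delta^1_3(y,<\!u_n)$. In the backward direction you assert that the relativized $W_n^x$ ``cofinally exhaust $\kappa_3^x$'' and hence that every $\Sigma_1$-definable ordinal is bounded by some $\nu_n^x$; here you have conflated $\sup(W_n^x)$ with $\nu_n^x$, the \emph{order type} of $W_n^x$. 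Each element of $W_n^x$ is indexed by a pair (formula, ordinal $<u_n$), so $W_n^x$ has at most $u_n$ elements and $\nu_n^x < u_n^{+} = u_{n+1} < u_{\omega}$ --- consistent with Lemma~\ref{lem:W_n_order_type}, which computes $\nu_n$ as a supremum of lengths of wellorderings of $u_n$. Hence no ordinal in the interval $[u_{\omega}, \kappa_3^x)$ is bounded by any $\nu_n^x$, and the proposed level-by-level reconstruction of the admissible hull does not get off the ground. The actual content of the conjecture is whether the ordinal $\kappa_3^x$, which lies far above $u_{\omega}$, is coded modulo the Fr\'{e}chet filter by the small sequence $(\nu_n^x)_{n<\omega}$ sitting below $u_{\omega}$; that collapse of information is the entire problem, and your sketch does not engage with it. Finally, even your first step needs more than you indicate: after relativizing Theorem~\ref{thm:LT_2_equal_to_M_1_infty}, identifying $(u_n^{+})^{M_{1,\infty}(x)}$ with $\nu_n^x$ still requires showing that every wellordering of $u_n$ lying in $L_{\kappa_3^x}[T_2,x]$ is $\Delta^1_3(x,<\!u_n)$ --- definable from ordinal parameters below $u_n$ rather than arbitrary parameters below $\kappa_3^x$ --- which calls for a condensation or reflection argument not present in the paper.
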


The uniqueness of $L[T_2]$, solved by Hjorth in \cite{hjorth_LT2}, has a local version which is more to the point, as $M_{1,\infty}$ is a mouse. 
\begin{question}
  Suppose that $(\psi_n : n<\omega)$ is  a $\Delta^1_3$-scale on a good universal $\Pi^1_2$ set such that each $\psi_n$ is $\game(<\!\omega^2\text{-}\Pi^1_1)$. Define
  \begin{displaymath}
    \mathcal{O}_{\Sigma^1_3, \vec{\psi}} = \set{ (n,\alpha, \gcode{\varphi})}{\varphi \text{ is } \Sigma^1_3, \exists x~(\psi_n(x) = \alpha \wedge \varphi(x))}.
  \end{displaymath}
Must
\begin{displaymath}
  L_{u_{\omega}}[\mathcal{O}_{\Sigma^1_3,\vec{\psi}}] = M_{1,\infty} | u_{\omega}?
\end{displaymath}
\end{question}

\section*{Acknowledgements}
\label{sec:acknoledgements}
This paper was partially written during Oberwolfach Set Theory Workshop, February 2017. The author thanks Steve Jackson for many helpful conversations. 

The author also thanks the referee for many helpful suggestions, including a note which results in a simplification of the original argument. The orginal arguments works only under $\boldDelta{3}$-determinacy. Now it becomes much clearer and works under the weaker hypothesis of $\boldDelta{2}$-determinacy.
\bibliography{m1}
\bibliographystyle{hplain}
\end{document}